\newtheorem{thm}{Theorem}
\newtheorem{defn}{Definition}
\newtheorem{lemma}{Lemma}
\newtheorem{pro}{Proposition}
\newtheorem{rk}{Remark}
\newtheorem{ex}{Example}
\numberwithin{equation}{section} \setcounter{tocdepth}{1}
\begin{document}
\title [A DISCRETE-TIME DYNAMICAL SYSTEM OF MOSQUITO POPULATION]
{A DISCRETE-TIME DYNAMICAL SYSTEM OF STAGE-STRUCTURED WILD AND STERILE MOSQUITO POPULATION}

\author{ Z.S. Boxonov, U.A. Rozikov}

\address{Z.\ S.\ Boxonov\\ Institute of mathematics, 81, Mirzo Ulug`bek str., 100170,
Tashkent, Uzbekistan.}
 \email {z.b.x.k@mail.ru}

\address{U.\ A.\ Rozikov \\ Institute of mathematics, 81, Mirzo Ulug`bek str., 100170,
Tashkent, Uzbekistan.} \email {rozikovu@yandex.ru}

\begin{abstract} We study the discrete-time dynamical systems associated to a stage-structured wild and sterile mosquito population.
We describe all fixed points of the evolution operator (which depends on five parameters) of mosquito population
and show that depending on the parameters this operator may have unique, two and infinitely many fixed points.
Under some general conditions on parameters we determine type of each fixed point and
give the limit points of the dynamical system. Moreover, for a special
case of parameters we give full analysis of corresponding dynamical system.
We give some biological interpretations of our results.
\end{abstract}

\subjclass[2010] {92D25 (34C60 34D20 92D30 92D40)}

\keywords{mosquito; population;  fixed point;
periodic point; limit point.} \maketitle

\section{Introduction}

In \cite{LP} the authors gave a mathematical model of mosquito dispersal,
which is a continuous-time dynamical systems of
mosquito populations. Recently, in \cite{RV} a discrete-time dynamical system, generated by an evolution
operator of this mosquito population is studied.

In this paper following \cite{J.Li} we consider another model of the mosquito population.
  Mosquitoes undergo complete metamorphosis going through four distinct stages
of development during a lifetime: egg, larva, pupa and adult \cite{Becker}. After drinking
blood, adult females lay eggs in water. Within a week, the eggs hatch into larvae
that breathe air through tubes which they poke above the surface of the
water. Larvae eat bits of floating organic matter and each other. Larvae molt four
times as they grow; after the fourth molt, they are called pupae. Pupae
also live near the surface of the water, breathing through two horn-like
tubes on their back. Pupae do not eat. When the skin splits after
a few days from a pupa, an adult emerges. The adult lives for only a few weeks and
the full life-cycle of a mosquito takes about a month \cite{Mosquito}, \cite{L.Alphey,A.Bartlet}.

Consider a wild mosquito population without the presence of
sterile mosquitoes. For the simplified stage-structured
mosquito population, we group the three aquatic stages into the
larvae class by $x$, and divide the mosquito population into
the larvae class and the adults, denoted by $y$. We assume that the density dependence exists only in the larvae stage \cite{J.Li}.

We let the birth rate, that is, the oviposition rate of adults be
$\beta(\cdot)$; the rate of emergence from larvae to adults be a
function of the larvae with the form of $\alpha(1-k(x))$,
where $\alpha>0$ is the maximum emergence rete, $0\leq
k(x)\leq 1$, with $k(0)=0, k'(x)>0$, and $\lim_{x\rightarrow
\infty}k(x)=1$, is the functional response due to
the intraspecific competition \cite{J}. We let the death rate
of larvae be a linear function, denoted by $d_{0}+d_{1}x$, and
the death rate of adults be constant, denoted by $\mu$. Then we
arrive at, in the absence of sterile mosquitoes, the following
system of equations:
 \begin{equation}\label{sys}
\left\{%
\begin{array}{ll}
    \frac{dx}{dt}=\beta(\cdot) y-\alpha(1-k(x))x-(d_{0}+d_{1}x)x,\\[3mm]
    \frac{dy}{dt}=\alpha(1-k(x))x-\mu y \\
\end{array}%
\right.\end{equation}
    We further assume a functional response for $k(x)$, as in
    \cite{J}, in the form $$k(x)=\frac{x}{1+x}.$$

    Suppose mosquito adults have no difficulty to find their mates such that no Allee effects are concerned, and hence the adults birth is constant, simply denoted as $\beta(\cdot)=\beta$ \cite{J.Li}. The interactive dynamics for the wild mosquitoes are governed by the following system:
\begin{equation}\label{system}
\left\{%
\begin{array}{ll}
    \frac{dx}{dt}=\beta y-\frac{\alpha x}{1+x}-(d_{0}+d_{1}x)x,\\[3mm]
    \frac{dy}{dt}=\frac{\alpha x}{1+x}-\mu y \\
\end{array}%
\right.\end{equation}
Denote
\begin{equation}\label{cont.thm}r_{0}=\frac{\alpha\beta}{(\alpha+d_{0})\mu}.
\end{equation}
The dynamistic generated by the system (\ref{system}) can be summarized as follows.
\begin{thm} \textbf{(Theorem 3.1 in \cite{J.Li})}: If $r_{0}\leq1$, where $r_{0}$ is defined in equation (\ref{cont.thm}), the trivial equilibrium $(0;0)$ of system (\ref{system}) is a globally asymptotically stable, and there is no positive equilibrium. If $r_{0}>1$, the trivial equilibrium $(0;0)$ is unstable, and there exists a unique positive equilibrium $(x^{(0)}, y_0)$ with
$$x^{(0)}=\frac{\sqrt{(d_{0}+d_{1})^{2}-4d_{1}(\alpha+d_{0})(1-r_{0})}-d_{0}-d_{1}}{2d_{1}}, \ \ y_{0}=\frac{\alpha x^{(0)}}{\mu(1+x^{(0)})},$$ which is a globally asymptotically stable.
\end{thm}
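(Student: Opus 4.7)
First, I would locate all equilibria. Setting $\dot y = 0$ gives $y = \alpha x/[\mu(1+x)]$; substituting into $\dot x = 0$ and clearing denominators produces
\[
x\bigl(d_1 x^2 + (d_0+d_1)x + (\alpha+d_0)(1-r_0)\bigr) = 0.
\]
The linear factor yields the trivial equilibrium $(0,0)$. The quadratic in brackets has sum of roots $-(d_0+d_1)/d_1 < 0$ and product $(\alpha+d_0)(1-r_0)/d_1$, so for $r_0 \le 1$ both roots are non-positive (no positive equilibrium), while for $r_0 > 1$ the product is negative and a unique positive root arises, coinciding with the stated $x^{(0)}$ and yielding $y_0 = \alpha x^{(0)}/[\mu(1+x^{(0)})]$.

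Second, I would analyse local stability via the Jacobian. At $(0,0)$,
\[
J(0,0) = \begin{pmatrix} -(\alpha+d_0) & \beta \\ \alpha & -\mu \end{pmatrix},
\]
with trace $-(\alpha+d_0+\mu) < 0$ and determinant $(\alpha+d_0)\mu(1-r_0)$, so $(0,0)$ is locally asymptotically stable when $r_0 < 1$ and a hyperbolic saddle when $r_0 > 1$. At $(x^{(0)}, y_0)$ the trace is still negative, and a direct computation using the equilibrium relation yields
\[
\det J(x^{(0)}, y_0) = \frac{\mu x^{(0)}\bigl(d_0 + d_1 + 2 d_1 x^{(0)}\bigr)}{1 + x^{(0)}} = \frac{\mu x^{(0)} H'(x^{(0)})}{1+x^{(0)}} > 0,
\]
where $H(x) = d_1 x^2 + (d_0+d_1)x + (\alpha+d_0)(1-r_0)$ and the positivity of $H'(x^{(0)})$ reflects that $x^{(0)}$ is the larger root of the convex quadratic $H$. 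Hence $(x^{(0)}, y_0)$ is locally asymptotically stable.

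To upgrade local stability to global, I would exploit that the system is \emph{cooperative} on the closed first quadrant, since $\partial f_1/\partial y = \beta > 0$ and $\partial f_2/\partial x = \alpha/(1+x)^2 > 0$. Hence the flow is order-preserving, and the estimate $\dot y \le \alpha - \mu y$ together with the coercive drag $-d_1 x^2$ in the $x$-equation yields an absorbing rectangle $[0,X]\times[0,Y]$, so every orbit is bounded. Since no periodic orbits can exist in a planar cooperative system, the Poincar\'{e}--Bendixson theorem forces every bounded orbit to converge to an equilibrium. When $r_0 \le 1$ the only equilibrium is $(0,0)$, giving global attraction to $(0,0)$; in particular the borderline case $r_0 = 1$, where linearisation is inconclusive, is handled automatically. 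When $r_0 > 1$, the Perron--Frobenius theorem applied to $J(0,0) + (\alpha+d_0+\mu)I$ places the unstable eigendirection of the saddle inside the open first quadrant and forces the stable eigendirection to have components of opposite signs, so the stable manifold of $(0,0)$ does not meet the open positive quadrant; hence every interior orbit must converge to $(x^{(0)}, y_0)$.

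The principal obstacle is this global step: constructing the absorbing rectangle explicitly and, in the case $r_0 > 1$, justifying that the stable manifold of the saddle $(0,0)$ stays out of the open first quadrant require the most care, whereas the equilibrium location and the linearisation computations are essentially routine algebra.
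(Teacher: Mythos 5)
The paper does not prove this statement at all: it is quoted verbatim as Theorem 3.1 of the cited reference [J.Li] (Li, Cai, Li, \emph{J. Biol. Dyn.} 2017) and used only as background for the continuous-time model, so there is no in-paper proof to compare yours against. Judged on its own terms, your outline is sound. The equilibrium computation is correct: substituting $y=\alpha x/[\mu(1+x)]$ and clearing denominators does give $x\bigl(d_1x^2+(d_0+d_1)x+(\alpha+d_0)(1-r_0)\bigr)=0$, and the sign analysis of the quadratic is right. Your determinant identity at the positive equilibrium also checks out: using $\alpha\beta/\mu=d_1x^2+(d_0+d_1)x+\alpha+d_0$ one finds $\det J=\mu x(d_0+d_1+2d_1x)/(1+x)$, which is positive at the larger root, so with negative trace the local classification is correct, as is the dichotomy at the origin via $\det J(0,0)=(\alpha+d_0)\mu(1-r_0)$. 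The global step via cooperativity (positive off-diagonal Jacobian entries), an absorbing rectangle, convergence of bounded orbits in planar monotone systems, and the Perron--Frobenius argument placing the saddle's stable eigendirection outside the closed first quadrant is a standard and workable route. Two points deserve explicit care: (i) the theorem's formula for $x^{(0)}$ and your coercivity argument both implicitly require $d_1>0$ (if $d_1=0$ the absorbing rectangle needs $d_0>0$ instead, and if $d_0=d_1=0$ boundedness can genuinely fail); (ii) when arguing that no interior orbit converges to the saddle for $r_0>1$, you should note that the positive axes are not invariant and that any orbit tending to a hyperbolic saddle lies on its stable manifold and hence approaches tangent to the stable eigendirection, whose both signings leave the closed quadrant --- that closes the argument cleanly.
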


In this paper (as in \cite{RV}, \cite{RS}) we study the discrete time dynamical systems associated to
the system(\ref{system}).

Define the operator $W:\mathbb{R}^{2}\rightarrow \mathbb{R}^{2}$ by
\begin{equation}\label{systema}
\left\{%
\begin{array}{ll}
    x'=\beta y-\frac{\alpha x}{1+x}-(d_{0}+d_{1}x)x+x,\\[3mm]
    y'=\frac{\alpha x}{1+x}-\mu y+y
\end{array}%
\right.\end{equation}
where $\alpha >0, \beta >0, \mu >0,\ d_{0}\geq0,\ d_{1}\geq0.$

We would like to study dynamical systems
corresponding to the operator (\ref{systema}).

The paper is organized as follows. In Section 2 we describe all fixed points of the  operator (\ref{systema}) of mosquito population
and show that depending on the parameters this operator may have unique, two and  infinitely many fixed points
(laying on the graph of a continuous function). In Section 3 we determine type of each fixed point and
give the limit points of the dynamical system under some general conditions on parameters. In Section 4 we consider a special
case of parameters and give full analysis of corresponding dynamical system.
In the last section we give some biological interpretations of the results.

\section{Fixed points}

Let $\mathbb{R}_{+}^{2}=\{(x,y): x,y\in\mathbb{R}, x\geq0, y\geq0\}.$
A point $z\in\mathbb{R}_{+}^{2}$ is called a fixed point of $W$ if
$W(z)=z$. The set of fixed points is denoted by Fix$(W)$.

Let us find fixed points of the operator $W.$ For this we solve the following system
\begin{equation}\label{Fsystema}
\left\{%
\begin{array}{ll}
    x=\beta y-\frac{\alpha x}{1+x}-(d_{0}+d_{1}x)x+x,\\[3mm]
    y=\frac{\alpha x}{1+x}-\mu y+y \\
\end{array}%
\right.\end{equation}
i.e.,
\begin{equation}\label{fixed}
\left\{%
\begin{array}{ll}
    \beta y=\frac{\alpha x}{1+x}+(d_{0}+d_{1}x)x,\\[3mm]
    \mu y=\frac{\alpha x}{1+x}\\
\end{array}%
\right.\end{equation}
Independently from parameters this system has a solution $(0,0)$.
To find other solutions, from the second equation of (\ref{fixed}) we obtain $y=\frac{\alpha x}{\mu (1+x)}$.

Denote $\gamma(x)=\frac{\alpha x}{\mu (1+x)}.$

Then from the first equation we get the following form
\begin{equation}\label{fix*}
d_{1}x^{2}+(d_{0}+d_{1})x+d_{0}+\alpha(1-\frac{\beta}{\mu})=0
\end{equation}
There are the following cases:

a) Let $d_{1}=0.$ Then (\ref{fix*}) has the form
\begin{equation}\label{fix2}
d_{0}x+d_{0}+\alpha(1-\frac{\beta}{\mu})=0
\end{equation}

a.1) if $d_{0}=0$ and $\beta=\mu$ then (\ref{fix2}) has infinitely many roots.

a.2) if $d_{0}=0$ and $\beta\neq\mu$ then (\ref{fix2}) has no roots.

a.3) if $d_{0}\neq0$ and $\beta =\mu(1+\frac{d_{0}}{\alpha})$ then $x=0.$

a.4) if $d_{0}\neq0$ and $\beta > \mu(1+\frac{d_{0}}{\alpha})$ then $x=\frac{\alpha(\beta-\mu)}{\mu d_{0}}-1\in\mathbb{R}_{+}\setminus \{0\}.$

b) Let $d_{1}\neq0$. The discriminant of (\ref{fix*}) is
$$\Delta=(d_{0}-d_{1})^{2}+\frac{4\alpha d_{1}(\beta -\mu)}{\mu}\geq0.$$

b.1) if $\Delta <0$ then (\ref{fix*}) has no roots.

b.2) if $\Delta=0$ then $x=-\frac{d_{0}+d_{1}}{2 d_{1}}\not\in\mathbb{R}_{+}.$

b.3) if $\Delta>0, \beta > \mu(1+\frac{d_{0}}{\alpha})$ then  $x=\frac{\sqrt{\Delta}-d_{0}-d_{1}}{2 d_{1}}\in\mathbb{R}_{+}\setminus \{0\}$.

Denote $$\Omega=\{(\alpha,\beta,\mu,d_{0},d_{1})\in\mathbb{R}^{5}:\alpha>0,\beta>0,\mu>0, d_{0}\geq0, d_{1}\geq0\},$$
$$\Phi_{1}=\{(\alpha,\beta,\mu,d_{0},d_{1})\in\Omega: d_{0}\neq0, d_{1}=0, \beta>\mu(1+\frac{d_{0}}{\alpha})\},$$
$$\Phi_{2}=\{(\alpha,\beta,\mu,d_{0},d_{1})\in\Omega: d_{1}\neq0, \beta>\mu(1+\frac{d_{0}}{\alpha})\},$$
$$\Psi=\{(\alpha,\beta,\mu,d_{0},d_{1})\in\Omega: d_{0}=d_{1}=0,\beta=\mu\},$$
$$\Omega^{*}=\Omega\setminus(\Phi_{1}\cup\Phi_{2}\cup\Psi).$$
Summarizing we formulate the following
\begin{thm}\label{fixthm}
\begin{itemize}
\item[a.] \textbf{Uniqueness of fixed point:}\ If $(\alpha,\beta,\mu,d_{0},d_{1})\in\Omega^{*}$ then the operator (\ref{systema}) has a unique fixed point $(0,0).$
\item[b.] \textbf{Two fixed points}, $(x_{i},\gamma(x_{i}))$, with $\gamma(x)=\frac{\alpha x}{\mu (1+x)}, i=1,2:$
\item[b.1)] If $(\alpha,\beta,\mu,d_{0},d_{1})\in\Phi_{1}$ then mapping (\ref{systema}) has two
fixed points with $$x_{1}=0, \ \ x_{2}=\frac{\alpha(\beta-\mu)}{\mu
d_{0}}-1.$$
\item[b.2)] If $(\alpha,\beta,\mu,d_{0},d_{1})\in\Phi_{2}$ then
the fixed points are with
 $$x_{1}=0, \ \ x_{2}=\frac{\sqrt{\Delta}-d_{0}-d_{1}}{2 d_{1}}.$$
\item[\textbf{c}.] If $(\alpha,\beta,\mu,d_{0},d_{1})\in\Psi$ then any point $(x,\gamma(x)), x\in \mathbb{R}_{+}$ is a fixed points of (\ref{systema}).
\end{itemize}
\end{thm}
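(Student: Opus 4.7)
The plan is to reduce the fixed-point equation $W(z)=z$ to the algebraic system (\ref{fixed}), from which one reads off that $(0,0)$ is a solution regardless of the parameters. To search for the remaining fixed points, I would eliminate $y$ via the second equation, which forces $y=\gamma(x)=\alpha x/(\mu(1+x))$ and automatically gives $y\ge 0$ once $x\ge 0$. Substituting this into the first equation, clearing the denominator $1+x$ and factoring out $x$, the non-zero fixed points correspond exactly to the positive roots of the polynomial equation (\ref{fix*}), namely $d_1 x^2+(d_0+d_1)x+d_0+\alpha(1-\beta/\mu)=0$.

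I would then split according to whether (\ref{fix*}) is genuinely quadratic. If $d_1=0$ the equation is linear in $x$, and one further distinguishes $d_0=0$ (the equation collapses to $\alpha(1-\beta/\mu)=0$, giving no new root when $\beta\ne\mu$ and a whole continuum of solutions when $\beta=\mu$, i.e., the case $\Psi$) from $d_0\ne 0$ (a single candidate root $x=\alpha(\beta-\mu)/(\mu d_0)-1$ which lies in $\mathbb{R}_+\setminus\{0\}$ precisely when $\beta>\mu(1+d_0/\alpha)$, i.e., on $\Phi_1$).

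In the quadratic case $d_1\ne 0$, rather than computing the roots explicitly I would use Vieta's formulas to keep track of signs: the sum of the roots equals $-(d_0+d_1)/d_1\le 0$, so the smaller root is never positive, while the product equals $(d_0+\alpha(1-\beta/\mu))/d_1$, whose sign decides whether the larger root is positive. This product is strictly negative iff $\beta>\mu(1+d_0/\alpha)$, which is exactly $\Phi_2$; positivity of the discriminant $\Delta=(d_0-d_1)^2+4\alpha d_1(\beta-\mu)/\mu$ in this regime is then automatic, and the positive root reads $(\sqrt{\Delta}-d_0-d_1)/(2d_1)$, matching $x_2$ in (b.2).

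Putting the cases together, the parameters in $\Omega^{*}$ are precisely those for which (\ref{fix*}) has no positive root, so $(0,0)$ is the unique fixed point, proving (a); on $\Phi_1\cup\Phi_2$ one picks up the additional positive root $x_2$ and both fixed points have the form $(x_i,\gamma(x_i))$, proving (b); and on $\Psi$ every $x\ge 0$ solves the degenerate form of (\ref{fix*}), yielding the curve of fixed points in (c). The only delicate point is the sign analysis in the quadratic case, but Vieta reduces it to the single inequality $\beta>\mu(1+d_0/\alpha)$, so no substantive obstacle remains.
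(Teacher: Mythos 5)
Your proposal is correct and follows essentially the same route as the paper: reduce $W(z)=z$ to the system (\ref{fixed}), eliminate $y$ via $y=\gamma(x)$, and classify the nonnegative roots of (\ref{fix*}) by cases on $d_1$ and $d_0$. The only variation is your use of Vieta's formulas in the quadratic case, which has the small advantage of making explicit that $\Delta>0$ is automatic on $\Phi_{2}$ --- a point the paper's case b.3 leaves implicit.
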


\section{Types of the fixed points}

To interpret values of $x$ and $y$ as probabilities we assume
$x\geq0$ and $y\geq0$. Moreover, to define a dynamical system we need that $W$
maps  $\mathbb{R}_{+}^{2}$ to itself. It is easy to see that if
\begin{equation}\label{parametr}
\alpha\leq1-d_0, \ \ \beta >0, \ \ 0<\mu\leq1,\  \ 0\leq d_{0}<1,\ \ d_{1}=0
\end{equation}
then operator (\ref{systema}) maps $\mathbb{R}_{+}^{2}$ to itself. In this case the system (\ref{systema}) becomes
\begin{equation}\label{syst}
\left\{%
\begin{array}{ll}
    x'=\beta y-(\frac{\alpha }{1+x}+d_{0}-1)x,\\[3mm]
    y'=\frac{\alpha x}{1+x}+(1-\mu)y
\end{array}%
\right.\end{equation}
Now we shall examine the type of the fixed points.
\begin{defn}\label{d1}
(see\cite{D}) A fixed point $s$ of the operator $W$ is called
hyperbolic if its Jacobian $J$ at $s$ has no eigenvalues on the
unit circle.
\end{defn}

\begin{defn}\label{d2}
(see\cite{D}) A hyperbolic fixed point $s$ called:

1) attracting if all the eigenvalues of the Jacobi matrix $J(s)$
are less than 1 in absolute value;

2) repelling if all the eigenvalues of the Jacobi matrix $J(s)$
are greater than 1 in absolute value;

3) a saddle otherwise.
\end{defn}
To find the type of a fixed point of the operator (\ref{syst})
we write the Jacobi matrix:

$$J(z)=J_{W}=\left(%
\begin{array}{cc}
  1-d_{0}-\frac{\alpha}{(1+x)^2} & \beta \\
  \frac{\alpha}{(1+x)^2} & 1-\mu \\
\end{array}%
\right).$$

The eigenvalues of the Jacobi matrix are
$$\lambda_{1,2}=\frac{1}{2}\left(2-g(x)\pm \sqrt{f(x)}\right),$$
where $g(x)=\mu+d_{0}+\frac{\alpha}{(1+x)^2},$
$f(x)=(\mu-d_{0}-\frac{\alpha}{(1+x)^2})^{2}+\frac{4\alpha
\beta}{(1+x)^2}$.

If
\begin{equation}\label{attrac1}
|\lambda_{1,2}|=|\frac{1}{2}\left(2-g(x)\pm \sqrt{f(x)}\right)|<1
\end{equation}
then fixed points are attractive.

The inequality (\ref{attrac1}) is equivalent to the following
\begin{equation}\label{attrac2}
\left\{%
\begin{array}{ll}
    0<g(x)\leq2  \\[2mm]
    \sqrt{f(x)}<g(x)
\end{array}%
\right. or \ \left\{%
\begin{array}{ll}
    2<g(x)<4 \\[2mm]
    \sqrt{f(x)}<4-g(x)
\end{array}%
\right.
\end{equation}
The fixed points are repelling  if
\begin{equation}\label{reppel1}
|\lambda_{1,2}|=|\frac{1}{2}\left(2-g(x)\pm \sqrt{f(x)}\right)|>1.
\end{equation}
The inequality (\ref{reppel1}) is equivalent to the following
\begin{equation}\label{reppel2}
\left\{%
\begin{array}{ll}
    g(x)<0  \\[2mm]
    \sqrt{f(x)}<-g(x)
\end{array}%
\right. or \ \left\{%
\begin{array}{ll}
    g(x)>4 \\[2mm]
    \sqrt{f(x)}<g(x)-4
\end{array}%
\right.
\end{equation}
Denote
$$\Theta=\{(\alpha,\beta,\mu,d_{0},d_{1})\in\Omega:d_{1}=0, \alpha\leq1-d_{0},0<\mu\leq1, 0\leq d_{0}<1\},$$
$$\Theta_{1}=\{(\alpha,\beta,\mu,d_{0},d_{1})\in\Omega:d_{1}=0, \mu+d_{0}+\alpha\leq2, \beta<\mu(1+\frac{d_{0}}{\alpha})\},$$
$$\Theta_{2}=\{(\alpha,\beta,\mu,d_{0},d_{1})\in\Omega:d_{1}=0, \mu+d_{0}+\alpha\leq2,\beta>\mu(1+\frac{d_{0}}{\alpha})\},$$
Note that the set $\Theta$ is the condition (\ref{parametr}), to work under this condition we need to introduce the following sets
$$\Theta^{*}=\Omega^{*}\cap\Theta, \ \ \Phi^{*}=\Theta\cap\Phi_{1}, \ \ \Psi^{*}=\Theta\cap\Psi.$$

By solving (\ref{attrac2}) and (\ref{reppel2}) at each fixed point we obtain the following
\begin{thm} \begin{itemize}
\item[\emph{a}.] The type of the unique fixed point, $(0,0)$, for (\ref{syst}) is as follows:
$$(0,0)\  is \left\{\begin{array}{ll}
    attractive \ \ \hbox{if}\ (\alpha,\beta,\mu,d_{0},d_{1})\in\Theta^{*}\cap\Theta_{1}, \\[2mm]
    saddle \ \ \hbox{if}\ \ (\alpha,\beta,\mu,d_{0},d_{1})\in\Theta^{*}\setminus\Theta_{1}.
\end{array}\right.$$
\item[\emph{b}.] The point $(x_{1},y_{1})=(0,0)$ is saddle if $(\alpha,\beta,\mu,d_{0},d_{1})\in\Phi^{*}$ and
$$(x_{2},\gamma(x_{2}))\ with\ x_{2}=\left\{\begin{array}{ll}
    \frac{\alpha(\beta-\mu)}{\mu d_{0}}-1 \ \hbox{is attractive} \ \ \hbox{if}\ (\alpha,\beta,\mu,d_{0},d_{1},d_{1})\in\Phi^{*}\cap\Theta_{2}, \\[2mm]
    \frac{\alpha(\beta-\mu)}{\mu d_{0}}-1 \ \hbox{is saddle} \ \ \hbox{if}\ (\alpha,\beta,\mu,d_{0},d_{1},d_{1})\in\Phi^{*}\setminus\Theta_{2}.
    \end{array}\right.$$
\item[\emph{c}.] For any $x\in\mathbb{R_{+}}$ the point $(x,\gamma(x))$ is saddle if $(\alpha,\beta,\mu,d_{0},d_{1})\in \Psi^{*}.$
\end{itemize}
\end{thm}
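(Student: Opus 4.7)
The plan is to compute the Jacobi matrix $J_W$ and its eigenvalues $\lambda_{1,2}$ at each fixed point, then classify according to Definitions \ref{d1}--\ref{d2} by checking which of the systems (\ref{attrac2}) or (\ref{reppel2}) the quantities $g(x)$ and $f(x)$ satisfy. A preliminary observation simplifies the analysis throughout: under the conditions defining $\Theta$ one has $\alpha + d_0 \le 1$ and $\mu \le 1$, so $g(x) \in (0, 2]$ for every relevant $x \ge 0$, which automatically rules out the repelling alternative of (\ref{reppel2}).

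For part (a), substituting $x = 0$ into the formulas for $g$ and $f$ yields $g(0) = \mu + d_0 + \alpha$ and $f(0) = (\mu - d_0 - \alpha)^2 + 4\alpha\beta$, and a direct calculation gives
$$g(0)^2 - f(0) = 4\bigl[\mu(d_0 + \alpha) - \alpha\beta\bigr],$$
so $\sqrt{f(0)} < g(0)$ is equivalent to $\beta < \mu(1 + d_0/\alpha)$. Combined with $g(0) \le 2$ this is precisely the first system of (\ref{attrac2}) and picks out $\Theta^* \cap \Theta_1$ as the attractive locus. On $\Theta^* \setminus \Theta_1$ the reversed inequality produces $\lambda_1 \ge 1$ while $\lambda_2$ remains strictly inside the unit disk, which gives the saddle classification.

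For part (b), the origin lies in $\Phi^*$ with $\beta > \mu(1 + d_0/\alpha)$, so the computation from (a) places it outside $\Theta_1$ and identifies it as a saddle. For the nontrivial fixed point I would exploit the identity $1 + x_2 = \alpha(\beta - \mu)/(\mu d_0)$; writing $A := \alpha/(1+x_2)^2 = \mu^2 d_0^2/[\alpha(\beta - \mu)^2]$ and expanding leads to
$$g(x_2)^2 - f(x_2) = 4\mu d_0 \left[1 - \frac{\mu d_0}{\alpha(\beta - \mu)}\right],$$
which is strictly positive precisely because $\beta > \mu(1 + d_0/\alpha)$ on $\Phi^*$. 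Since $x_2 > 0$ forces $A < \alpha$, one has $g(x_2) < \mu + d_0 + \alpha$, so $g(x_2) \le 2$ holds on $\Phi^* \cap \Theta_2$ and (\ref{attrac2}) yields attractivity; on the complement $\Phi^* \setminus \Theta_2$ the bound on $g(x_2)$ can fail and a saddle arises from the corresponding system of (\ref{reppel2}).

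For part (c), the assumptions $d_0 = d_1 = 0$, $\beta = \mu$ reduce the Jacobian at $(x, \gamma(x))$ to
$$J = \begin{pmatrix} 1 - \alpha/(1+x)^2 & \mu \\ \alpha/(1+x)^2 & 1 - \mu \end{pmatrix},$$
and expanding $\det(J - \lambda I)$ factors cleanly as $(\lambda - 1)\bigl(\lambda - (1 - \mu - \alpha/(1+x)^2)\bigr)$; thus one eigenvalue is $1$, corresponding to the neutral direction tangent to the curve of fixed points, while the second lies strictly inside the unit disk under the $\Psi^*$ bounds, matching the paper's saddle convention. The main obstacle of the whole argument is the substitution step in (b): one has to manipulate $g$ and $f$ at the explicit $x_2$ carefully enough to recognize the compact identity above, after which the sign analyses and the case split along $\Theta_2$ become essentially automatic.
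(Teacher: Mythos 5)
Your overall strategy---evaluating $g$ and $f$ at each fixed point and testing the systems (\ref{attrac2}) and (\ref{reppel2})---is exactly what the paper does (its entire proof is the one sentence preceding the theorem), and your computations for the attractive cases check out: $g(0)^2-f(0)=4\bigl[\mu(d_0+\alpha)-\alpha\beta\bigr]$, the identity $g(x_2)^2-f(x_2)=4\mu d_0\bigl[1-\tfrac{\mu d_0}{\alpha(\beta-\mu)}\bigr]$ via $1+x_2=\alpha(\beta-\mu)/(\mu d_0)$, and the factorization of the characteristic polynomial in part (c) are all correct.

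There is, however, a genuine gap in the saddle case of part (a). You assert that on $\Theta^{*}\setminus\Theta_{1}$ the eigenvalue $\lambda_{2}=\tfrac{1}{2}\bigl(2-g(0)-\sqrt{f(0)}\bigr)$ ``remains strictly inside the unit disk,'' but $|\lambda_{2}|<1$ requires $\sqrt{f(0)}<4-g(0)$, which after squaring is the condition $\alpha\beta<(2-\mu)(2-\alpha-d_{0})$; nothing in the definition of $\Theta^{*}\setminus\Theta_{1}$ bounds $\beta$ from above, so this step fails. Concretely, $(\alpha,\beta,\mu,d_{0},d_{1})=(1,100,1,0,0)$ lies in $\Theta^{*}\setminus\Theta_{1}$, yet $g(0)=2$, $f(0)=400$ and $\lambda_{1,2}=\pm 10$, so $(0,0)$ is repelling rather than a saddle. (This is a defect of the theorem statement itself, which the paper's one-line justification also does not confront, but as a standalone argument your proof of that case does not go through.) Two smaller points: when $d_{0}\neq 0$ and $\beta=\mu(1+d_{0}/\alpha)$ one gets $g(0)^2=f(0)$ and hence $\lambda_{1}=1$, so the fixed point is not hyperbolic and the trichotomy of Definition \ref{d2} does not apply; and on $\Theta$ one automatically has $\mu+d_{0}+\alpha\le 2$, so $\Phi^{*}\setminus\Theta_{2}=\emptyset$ and the ``saddle'' alternative of part (b), where you say the bound on $g(x_{2})$ ``can fail,'' is in fact vacuous.
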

\begin{rk} We have the following \begin{itemize}
\item if $(\alpha,\beta,\mu,d_{0},d_{1})\in\Theta$ then the fixed point with $x_{2}=\frac{\sqrt{\Delta}-d_{0}-d_{1}}{2 d_{1}}$ is outside of $\mathbb R_+^2$

\item if $(\alpha,\beta,\mu,d_{0},d_{1})\in \Theta^*$ then Fix$(W)\cap\{(x,\gamma(x)):x\in\mathbb{R_{+}}\}=\{(0,0)\}.$
\end{itemize}
\end{rk}
From the known theorems (see \cite{D} and  \cite{G})
 we get the following result
 \begin{pro}\label{pr} For the operator $W$ given by (\ref{systema}), under condition (\ref{parametr}) the following holds
 $$\lim_{n\to \infty}W^n(z_0)=\left\{\begin{array}{ll}
 (0,0), \ \ \mbox{if} \ \ \beta\leq \mu\left(1+{d_0\over \alpha}\right), \ \ \mbox{and} \ \ z_0\in \mathbb U_0\\[2mm]
 (x^*,y^*), \ \ \mbox{if} \ \ \beta> \mu\left(1+{d_0\over \alpha}\right), \ \ \mbox{and} \ \ z_0\in \mathbb U^*
 \end{array}\right.$$
 where $W^n$ is $n$-th iteration of $W$, $x^*={\alpha(\beta-\mu)\over \mu d_0}-1$, $y^*={\alpha x^*\over \mu(1+x^*)}$ and $\mathbb U_0$ is a neighborhood of $(0,0)$,
 $\mathbb U^*$ is a neighborhood of $(x^*,y^*)$.
  \end{pro}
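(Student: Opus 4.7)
The plan is to obtain Proposition \ref{pr} as a direct consequence of the preceding classification theorem (types of the fixed points) combined with the standard local stability result for attracting hyperbolic fixed points of smooth discrete dynamical systems recorded in \cite{D,G}.

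First I would verify that condition (\ref{parametr}) places the parameter tuple inside the scope of the previous theorem. Since $d_{1}=0$, $\alpha\leq 1-d_{0}$, $0<\mu\leq 1$, $0\leq d_{0}<1$, we have $\mu+d_{0}+\alpha\leq 1+\mu\leq 2$, so $(\alpha,\beta,\mu,d_{0},d_{1})\in\Theta$. Splitting on the sign of $\beta-\mu(1+d_{0}/\alpha)$: when $\beta<\mu(1+d_{0}/\alpha)$ the tuple lies in $\Theta^{*}\cap\Theta_{1}$, so $(0,0)$ is the unique fixed point and is attractive; when $\beta>\mu(1+d_{0}/\alpha)$ the tuple lies in $\Phi^{*}\cap\Theta_{2}$, so $(0,0)$ is a saddle while $(x^{*},y^{*})$ with $x^{*}=\alpha(\beta-\mu)/(\mu d_{0})-1$ and $y^{*}=\alpha x^{*}/(\mu(1+x^{*}))$ is attractive.

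Second, because $W$ is smooth on $\mathbb{R}_{+}^{2}$ and the relevant fixed point $p$ is attracting in the sense of Definition \ref{d2}, the local stability theorem from \cite{D,G} furnishes an open neighborhood $\mathbb{U}$ of $p$ such that $W^{n}(z_{0})\to p$ for every $z_{0}\in\mathbb{U}$. Labeling this neighborhood $\mathbb{U}_{0}$ (respectively $\mathbb{U}^{*}$) in the two alternatives yields the stated convergence.

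The main obstacle is the borderline case $\beta=\mu(1+d_{0}/\alpha)$ included in the weak inequality of the first alternative. A direct computation at $(0,0)$ gives the characteristic equation $\lambda^{2}-(2-d_{0}-\alpha-\mu)\lambda+(1-d_{0}-\alpha)(1-\mu)-\alpha\beta=0$, whose evaluation at $\lambda=1$ reduces to $\mu(d_{0}+\alpha)-\alpha\beta=0$, i.e.\ precisely the borderline equation. Hence $\lambda=1$ is an eigenvalue of $J(0,0)$, the fixed point is not hyperbolic, and the linearization argument does not apply. To close this gap I would either exhibit a Lyapunov function on a neighborhood of the origin, exploiting the nonlinear correction $\alpha x/(1+x)=\alpha x-\alpha x^{2}+O(x^{3})$ which strictly lowers $x$ along the neutral direction, or perform a one-dimensional center-manifold reduction and verify that the reduced dynamics is strictly contracting. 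This critical case is the only place where work beyond a direct appeal to the preceding theorem is required.
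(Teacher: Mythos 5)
Your proposal follows essentially the same route as the paper, which in fact prints no proof of Proposition \ref{pr} at all beyond the sentence ``from the known theorems (see \cite{D} and \cite{G}) we get the following result'': the intended argument is exactly the one you spell out, namely that condition (\ref{parametr}) forces $\mu+d_0+\alpha\le 2$, the strict inequalities place the parameters in $\Theta^{*}\cap\Theta_{1}$ (respectively $\Phi^{*}\cap\Theta_{2}$, which tacitly needs $d_0>0$ since $x^{*}$ divides by $d_0$), the classification theorem declares the relevant fixed point attracting, and the local stability theorem for hyperbolic attracting fixed points supplies the neighborhoods $\mathbb U_0$ and $\mathbb U^{*}$. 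Where you genuinely go beyond the paper is the borderline case $\beta=\mu(1+d_0/\alpha)$, which is included in the weak inequality of the first alternative. Your computation that $\lambda=1$ is then an eigenvalue of $J(0,0)$ is correct, so $(0,0)$ is non-hyperbolic there, Definition \ref{d2} and the cited theorems do not apply, and the paper silently skips this case (its own classification would even label the point a ``saddle'', which is inconsistent with the claimed convergence). Your center-manifold plan does close the gap when $d_0>0$: the reduced one-dimensional map works out to $x\mapsto x+\frac{\alpha(\mu-\beta)}{d_0+\alpha+\mu}x^{2}+O(x^{3})$, and on the borderline with $d_0>0$ one has $\beta>\mu$, so the quadratic coefficient is negative and the origin attracts from the side $x\ge 0$ (one further degenerate corner, $\mu=1$, $\alpha=1-d_0$, puts the second eigenvalue at $-1$ and needs separate care). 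However, you should treat the sub-case $d_0=0$ of the borderline separately: there the condition becomes $\beta=\mu$, the parameters lie in $\Psi$, the entire curve $(x,\gamma(x))$ consists of fixed points, and no neighborhood of $(0,0)$ converges to $(0,0)$ --- so in that sub-case the first alternative of the proposition is actually false as stated, and a completed proof must exclude it or weaken the claim.
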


In the following examples we show that if the condition (\ref{parametr}) is not satisfied then
the dynamical system may have several kind of limit points.

\begin{ex} With parameters $(\alpha, \beta, \mu, d_{0}, d_{1}) =(1.5, 0.4, 0.5, 0, 0)$ belong to the set $\Omega^{*}\setminus\Theta.$
If the initial point is $(x_{0}, y_{0})=(5,4)$ then the trajectory of system (\ref{systema}) is shown in the Fig. \ref{fig.1}, i.e., it converges to $(0,0)$.
\end{ex}
\begin{ex} With parameters $(\alpha, \beta, \mu, d_{0}, d_{1}) =(1.5, 0.5, 0.4, 0, 0)$ belong to set $\Omega^{*}\setminus\Theta.$ If the initial point is $(x_{0}, y_{0})=(10,9)$ then the trajectory of system (\ref{systema}) is shown in the Fig. \ref{fig.2}. In this case the first coordinate of the trajectory
goes to infinite and the second coordinate has limit point approximately $3.75$.
\end{ex}
\begin{ex} With parameters $(\alpha, \beta, \mu, d_{0}, d_{1}) =(6, 0.5, 0.4, 0.6, 0)$ belong to set $\Phi_{1}\setminus\Theta.$ If the initial point is $(x_{0}, y_{0})=(50,80)$ then the trajectory of system (\ref{systema}) is shown in the Fig.\ref{fig.3}, i.e., it converges to the fixed point $(1.5, 9)$.
\end{ex}

\begin{figure}[h!]
\begin{multicols}{2}
\hfill
\includegraphics[width=0.45\textwidth]{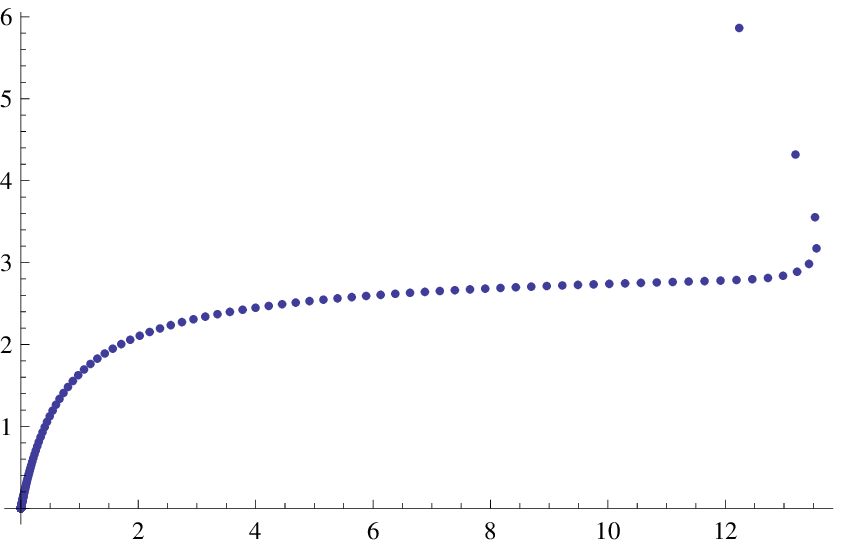}\\
\hfill
\caption{$(\alpha, \beta, \mu, d_{0}, d_{1}) =(1.5, 0.4, 0.5, 0, 0)$}\label{fig.1}
\includegraphics[width=0.45\textwidth]{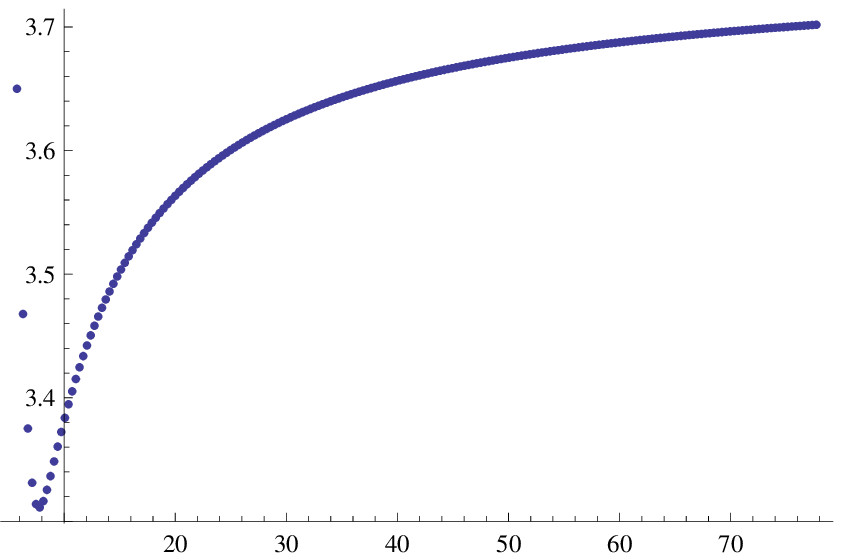}\\
\hfill
\caption{$(\alpha, \beta, \mu, d_{0}, d_{1}) =(1.5, 0.5, 0.4, 0, 0)$}\label{fig.2}

\end{multicols}
\end{figure}

\begin{figure}[h!]
  \includegraphics[width=0.5\textwidth]{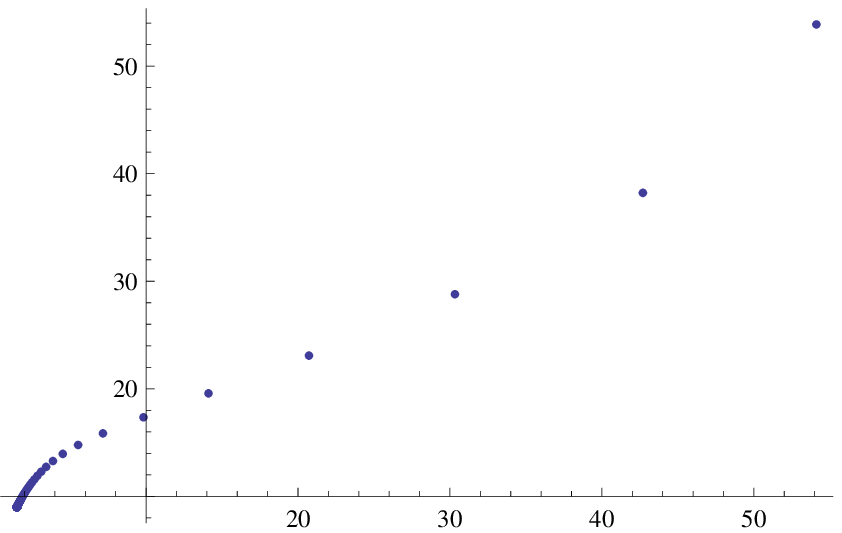}\\
  \caption{$(\alpha, \beta, \mu, d_{0}, d_{1}) =(6, 0.5, 0.4, 0.6, 0)$}\label{fig.3}
\end{figure}

\section{Dynamics for a special case}
In this section we assume
$$\beta=\mu, \ \ d_{0}=d_{1}=0$$
 then (\ref{systema}) has the following form
\begin{equation}\label{systemacase1}
W_{0}:\left\{%
\begin{array}{ll}
    x'=\beta y-\frac{\alpha x}{1+x}+x,\\[2mm]
    y'=\frac{\alpha x}{1+x}-\beta y+y.
\end{array}%
\right.\end{equation}

\begin{figure}[h!]
  \includegraphics[width=0.5\textwidth]{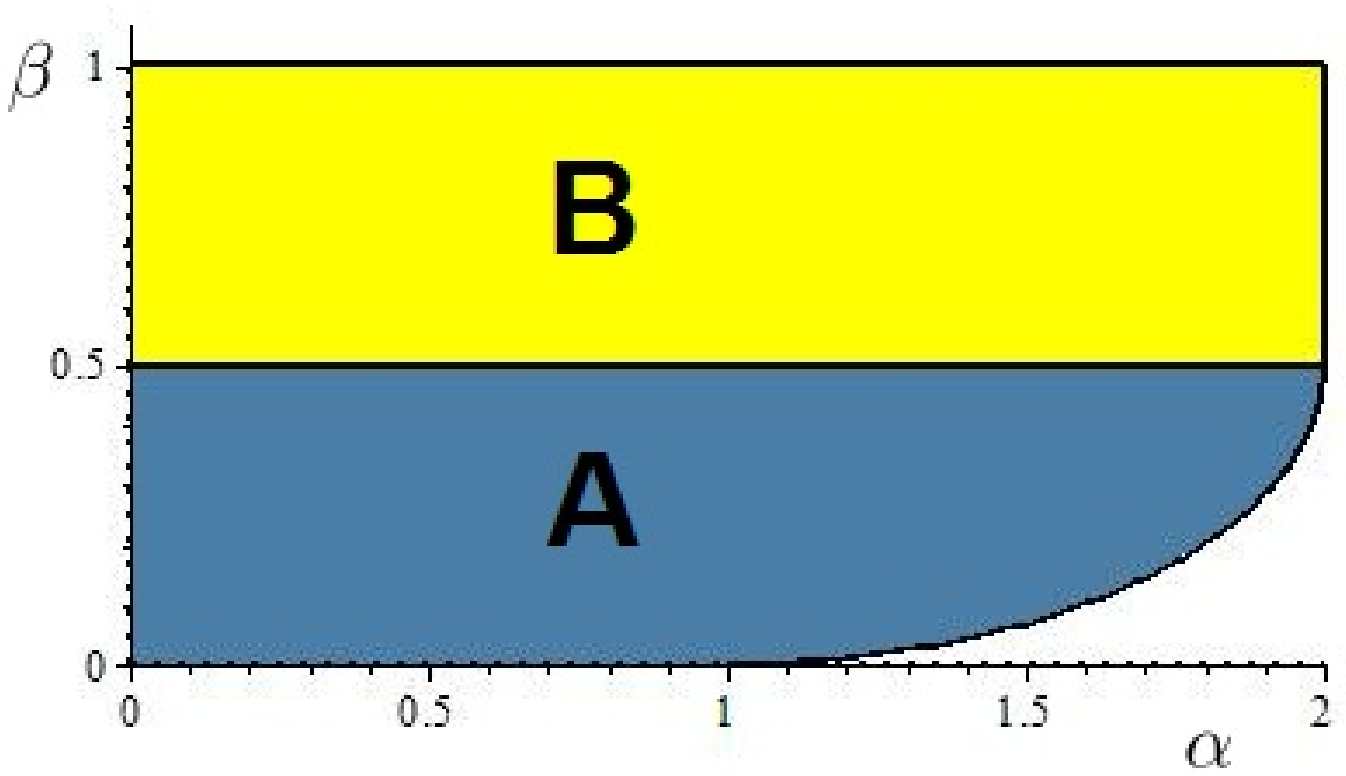}\\
  \caption{ }\label{}
\end{figure}
We denote
$$S=\{(x,y) \, |\, x,y\in\mathbb{R_{+}}, x+y=1\},$$
$$A=\{(\alpha;\beta):\ \beta\in(0;\frac{1}{2}),\ \alpha\in(0;1+2\sqrt{\beta(1-\beta)}]\},$$
$$B=\{(\alpha;\beta):\ \beta\in[\frac{1}{2};1],\ \alpha\in(0;2]\}.$$
The following lemma is useful
\begin{lemma} The operator $W_{0}$ maps the set $S$ to itself if and only if $(\alpha,\beta)\in A\cup B$.
\end{lemma}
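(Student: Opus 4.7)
The key observation is that $W_0$ preserves the sum of coordinates: adding the two equations of (\ref{systemacase1}) gives $x'+y'=x+y$, so $W_0(S)$ automatically lies on the affine line $\{z_1+z_2=1\}$. The question of whether $W_0$ maps $S$ into $S$ therefore reduces to the one-dimensional problem of checking that both coordinates remain nonnegative. Parametrize $S$ by $x\in[0,1]$ via $y=1-x$ and set
\begin{equation*}
f(x)=\beta(1-x)-\frac{\alpha x}{1+x}+x, \qquad x\in[0,1],
\end{equation*}
so that $x'=f(x)$ and $y'=1-f(x)$. Then $W_0(S)\subseteq S$ iff $0\le f(x)\le 1$ for all $x\in[0,1]$.

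The strategy is a convexity analysis of $f$. Compute
\begin{equation*}
f'(x)=1-\beta-\frac{\alpha}{(1+x)^2}, \qquad f''(x)=\frac{2\alpha}{(1+x)^3}>0,
\end{equation*}
so $f$ is strictly convex. The upper bound $f\le 1$ is then immediate from $\max f=\max(f(0),f(1))=\max(\beta,\,1-\alpha/2)$, yielding exactly $\beta\le 1$ together with $\alpha\ge 0$. For the lower bound, split into three subcases according to the signs of $f'(0)=1-\beta-\alpha$ and $f'(1)=1-\beta-\alpha/4$:
(i) if $\alpha\le 1-\beta$, then $f$ is nondecreasing, so $\min f=f(0)=\beta>0$;
(ii) if $\alpha\ge 4(1-\beta)$, then $f$ is nonincreasing, so $\min f=f(1)=1-\alpha/2\ge 0$ iff $\alpha\le 2$;
(iii) otherwise there is a unique interior critical point $x^\ast=\sqrt{\alpha/(1-\beta)}-1$, at which a short computation (substituting $\alpha=(1-\beta)(1+x^\ast)^2$) gives
\begin{equation*}
f(x^\ast)=2\beta-1-\alpha+2\sqrt{\alpha(1-\beta)}.
\end{equation*}
The inequality $f(x^\ast)\ge 0$ becomes a quadratic in $s=\sqrt{\alpha}$, namely $(s-\sqrt{1-\beta})^2\le \beta$, which simplifies to $\alpha\le 1+2\sqrt{\beta(1-\beta)}$.

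The final step is merely the bookkeeping of combining these three cases with the constraint $\beta\in(0,1]$. When $\beta<1/2$ one checks that $4(1-\beta)>2$, so case (ii) is vacuous, while case (iii)'s range $(1-\beta,4(1-\beta))$ contains the binding bound $1+2\sqrt{\beta(1-\beta)}$; consequently the full feasible region is $\alpha\in(0,\,1+2\sqrt{\beta(1-\beta)}]$, which is exactly $A$. When $\beta\ge 1/2$ one shows that $1+2\sqrt{\beta(1-\beta)}\ge 4(1-\beta)$, so case (iii)'s extra constraint is redundant on its range, and cases (i)–(iii) together with (ii) fill out $\alpha\in(0,2]$, which is exactly $B$. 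The main (minor) obstacle is this last algebraic comparison between $1+2\sqrt{\beta(1-\beta)}$ and $4(1-\beta)$, which is handled by squaring and factoring the resulting quadratic $20\beta^2-28\beta+9$ whose roots are $\beta=1/2$ and $\beta=9/10$; the sign flip at $\beta=1/2$ is precisely what forces the split between the definitions of $A$ and $B$.
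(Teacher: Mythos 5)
Your proof is correct, and it reaches the same reduction as the paper (adding the two equations to get $x'+y'=x+y$, then requiring $0\le f(x)\le1$ on $[0,1]$ for $f(x)=\beta(1-x)-\frac{\alpha x}{1+x}+x$), but from that point on it takes a genuinely different route. The paper clears the denominator to obtain the pair of quadratic inequalities $(1-\beta)x^{2}+(1-\alpha)x+\beta\ge0$ and $(1-\beta)x^{2}-\alpha x+\beta-1\le0$ on $[0,1]$, discards the second as automatic, and analyzes the first by the sign of its discriminant and the location of its roots relative to $[0,1]$. You instead exploit strict convexity of $f$ (from $f''(x)=2\alpha/(1+x)^{3}>0$): the upper bound is read off at the endpoints, and the lower bound is governed by the minimum, which is either an endpoint value or the interior value $f\bigl(\sqrt{\alpha/(1-\beta)}-1\bigr)=2\beta-1-\alpha+2\sqrt{\alpha(1-\beta)}$, whose nonnegativity collapses to $(\sqrt{\alpha}-\sqrt{1-\beta})^{2}\le\beta$. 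Your case split by the position of the critical point is essentially the same partition ($C$, $D$, $E^{*}$, $F^{*}$) that the paper only introduces later, in the proof of the trajectory lemma, so your argument unifies the two computations; the paper's discriminant analysis is more purely algebraic and avoids calculus. Two small points worth making explicit in your write-up: in case (iii) the quadratic in $s=\sqrt{\alpha}$ also produces the lower bound $\alpha\ge1-2\sqrt{\beta(1-\beta)}$, which you should note is implied by the standing assumption $\alpha>1-\beta$ of that case (it is, since $2\sqrt{\beta(1-\beta)}\ge\beta$ for $\beta\le1/2$ and the bound is negative-sided for $\beta>1/2$); and the degenerate value $\beta=1$, where the interior critical point formula breaks down, should be folded into your monotone-decreasing case (ii), which it is since $4(1-\beta)=0$ there. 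Neither affects the validity of the conclusion.
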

\begin{proof} Necessity. Let $z=(x,y)\in S.\ z'=W_{0}(z)=(x', y')$.
If we add equations of (\ref{systemacase1}) then $x'+y'=x+y=1$. So, we have $y=1-x$ and $x'=\beta(1-x)-\frac{\alpha x}{1+x}+x.$
If $x=0$ then $x'=\beta$. Since $x'\in[0;1]$ we get $\beta \in(0;1]$.
If $x=1$ then $x'=1-\frac{\alpha}{2}$ and $\alpha \in(0;2]$ because $x'\in[0;1]$.
Moreover, for $x'\in [0;1]$ it should be true the inequalities $0\leq \beta(1-x)-\frac{\alpha x}{1+x}+x\leq1$. Let us write these inequalities as
\begin{equation}\label{qo`sh}
\left\{
  \begin{array}{ll}
    (1-\beta)x^{2}+(1-\alpha)x+\beta \geq0,\\[2mm]
    (1-\beta)x^{2}-\alpha x+\beta-1\leq0.  \end{array}
\right.\end{equation}
The second inequality in (\ref{qo`sh}) is always true for all $\alpha>0$ and $\beta\in(0;1]$. Hence we solve
the first inequality under conditions $\alpha>0$ and $\beta\in(0; 1]$.
\begin{itemize}
  \item[\textbf{i)}] if  $\beta\in(0;1],\ (1-\alpha)^{2}-4\beta(1-\beta)\leq0$ for (\ref{qo`sh}), then $x\in[0;1]$. Also we obtain $\alpha\in[1-2\sqrt{\beta(1-\beta)};1+2\sqrt{\beta(1-\beta)}]$.
  \item[\textbf{ii)}] if $\beta\in(0;1],\ (1-\alpha)^{2}-4\beta(1-\beta)\geq0$ for (\ref{qo`sh}), then $x\in[0;1]\subset(-\infty;x^{(1)}]$ or $x\in[0;1]\subset[x^{(2)};\infty)$.
 So, it is possible that $x^{(1)}=\frac{\alpha-1-\sqrt{(1-\alpha)^{2}-4\beta(1-\beta)}}{2(1-\beta)}\geq1$ or $x^{(2)}=\frac{\alpha-1+\sqrt{(1-\alpha)^{2}-4\beta(1-\beta)}}{2(1-\beta)}\leq0$.

From the inequality $x^{(1)}\geq1$ we get
$$\left\{
  \begin{array}{ll}
    \alpha-1-2(1-\beta)\geq \sqrt{(1-\alpha)^{2}-4\beta(1-\beta)},\\[2mm]
   (1-\alpha)^{2}-4\beta(1-\beta)\geq0.
  \end{array}
\right.$$
Consequently, $\beta\in[\frac{1}{2};1],\ \alpha\in[1+2\sqrt{\beta(1-\beta)};2]$.

From the inequality $x^{(2)}\leq0$
$$\left\{
  \begin{array}{ll}
    \sqrt{(1-\alpha)^{2}-4\beta(1-\beta)}\leq 1-\alpha, \\[2mm]
    (1-\alpha)^{2}-4\beta(1-\beta)\geq0
  \end{array}
\right.$$
and it follows that $\beta\in(0;1],\ \alpha\in(0;1-2\sqrt{\beta(1-\beta)}]$.
\end{itemize}

So, $x\in[0;1]$ holds for (\ref{qo`sh}) when $\alpha>0$ and $\beta\in(0;1]$ and in cases \textbf{i), ii)}  it holds $\beta\in(0;1], \alpha\in[1-2\sqrt{\beta(1-\beta)};1+2\sqrt{\beta(1-\beta)}]$ or $\beta\in[\frac{1}{2};1],\ \alpha\in[1+2\sqrt{\beta(1-\beta)};2]$ or $\beta\in(0;1],\ \alpha\in(0;1-2\sqrt{\beta(1-\beta)}]$ for parameters $\alpha$ and $\beta$.

Sufficiency. It is easy to check if $(\alpha,\beta)\in A\cup B$ then $W_{0}:S\to S$.
\end{proof}

The restriction on $S$ of the operator $W_{0}$, denoted by $U$, has the form
\begin{equation}\label{case1}
U:x'=\beta(1-x) -\frac{\alpha x}{1+x}+x.
\end{equation}
$U:S_{1}\rightarrow S_{1}$, $S_{1}=\{x:x\in[0;1]\}.$

For fixed point of $U$ the following lemma holds.

\begin{lemma}\label{lemmafix}
(\ref{case1}) has unique fixed point
$x^{*}=\frac{\sqrt{\alpha^{2}+4\beta^{2}}-\alpha}{2\beta}$.
\end{lemma}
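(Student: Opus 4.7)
The plan is to rewrite the fixed-point equation $U(x) = x$ as a simple quadratic in $x$, solve it explicitly, and then verify that exactly one of the two roots lies in $S_{1} = [0,1]$.

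Starting from the definition \eqref{case1}, the equation $U(x) = x$ reads
$$x = \beta(1-x) - \frac{\alpha x}{1+x} + x,$$
which after canceling the $x$ on both sides and multiplying by $1+x$ becomes
$$\beta(1-x)(1+x) = \alpha x, \qquad \text{i.e.,} \qquad \beta x^{2} + \alpha x - \beta = 0.$$
Since $\beta > 0$ this is a genuine quadratic, so by the quadratic formula its two roots are
$$x_{\pm} = \frac{-\alpha \pm \sqrt{\alpha^{2} + 4\beta^{2}}}{2\beta}.$$

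The root $x_{-}$ is strictly negative (being the sum of two negative terms over a positive denominator), hence it does not belong to $S_{1}$. The root $x_{+}$ coincides with the formula $x^{*} = \tfrac{\sqrt{\alpha^{2} + 4\beta^{2}} - \alpha}{2\beta}$ in the statement, and it is strictly positive because $\sqrt{\alpha^{2} + 4\beta^{2}} > \alpha$.

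The one remaining point is to verify $x^{*} \le 1$. Since $2\beta + \alpha > 0$, the inequality $\sqrt{\alpha^{2} + 4\beta^{2}} \le \alpha + 2\beta$ is equivalent, after squaring, to $0 \le 4\alpha\beta$, which is obvious. Thus $x^{*} \in (0,1]$, and it is the unique fixed point of $U$ in $S_{1}$. I do not anticipate any real obstacle: the whole argument is a routine simplification of the fixed-point equation into a quadratic together with an elementary sign check on the two roots.
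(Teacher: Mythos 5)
Your proposal is correct and follows essentially the same route as the paper: reduce $U(x)=x$ to the quadratic $\beta x^{2}+\alpha x-\beta=0$, obtain the two roots $\frac{-\alpha\pm\sqrt{\alpha^{2}+4\beta^{2}}}{2\beta}$, and discard the negative one. You actually supply the details (positivity of $x^{*}$ and the check $x^{*}\le 1$ via $\sqrt{\alpha^{2}+4\beta^{2}}\le\alpha+2\beta$) that the paper leaves to the reader with ``it is easy to see'' and ``one checks.''
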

\begin{proof} We need to solve $x=\beta(1-x) -\frac{\alpha x}{1+x}+x$. It is easy to see that $x_{1,2}=\frac{-\alpha\pm
\sqrt{\alpha^{2}+4\beta^{2}}}{2\beta}$ are roots. Since these roots should be in $S_{1}$, one checks that
$x_{1}=\frac{-\alpha+\sqrt{\alpha^{2}+4\beta^{2}}}{2\beta}\in S_{1},\
 \ x_{2}=\frac{-\alpha-\sqrt{\alpha^{2}+4\beta^{2}}}{2\beta}\not\in S_{1}.$
\end{proof}

Suppose $x_{0}$ is a fixed point for $U$. For one dimensional dynamical systems it is known that $x_{0}$ is an
attracting fixed point if $|U'(x_{0})|<1$. The point $x_{0}$ is
a repelling fixed point if $|U'(x_{0})|>1$. Finally, if
$|U'(x_{0})|=1$, the fixed point is saddle \cite{D}.

Let us calculate the derivative of $U(x)$ at the fixed point $x^{*}$.

Consider
$$1+x^{*}=1+\frac{-\alpha+\sqrt{\alpha^{2}+4\beta^{2}}}{2\beta}=\frac{2\alpha}{\alpha-2\beta+\sqrt{\alpha^{2}+4\beta^{2}}}.$$
Thus we obtain
$U'(x^{*})=1-\beta-\frac{\alpha}{(1+x^{*})^{2}}=1-\frac{\alpha^{2}+4\beta^{2}+(\alpha-2\beta)\sqrt{\alpha^{2}+4\beta^{2}}}{2\alpha}$.
\begin{itemize}
  \item[1)] Let $|U'(x^{*})|<1$. Then we have $\beta(2\beta-\alpha)+(1-\beta)(2\beta-\alpha+\sqrt{\alpha^{2}+4\beta^{2}})>0$.
  This inequality is always true in $(\alpha;\beta)\in (A\cup B)\setminus\{(2;1)\}$.
  \item [2)]Let $|U'(x^{*})|=1$.  If $U'(x^{*})=1$, then $\alpha^{2}+4\beta^{2}+(\alpha-2\beta)\sqrt{\alpha^{2}+4\beta^{2}}=0$. So we have $\alpha=0,\ \beta=0$. If $U'(x^{*})=-1$, then $\alpha^{2}-4\alpha+4\beta^{2}+(\alpha-2\beta)\sqrt{\alpha^{2}+4\beta^{2}}=0$. This equality holds when $\alpha=2,\ \beta=1$.
  \item [3)] Let $|U'(x^{*})|>1$. This inequality does not hold in $(\alpha;\beta)\in (A\cup B)\setminus\{(2;1)\}$.
\end{itemize}
For type of $x^{*}$ the following lemma holds.
\begin{lemma}\label{atr} The type of the fixed point $x^{*}$  for (\ref{case1}) are as follows:
\begin{itemize}
  \item[i)] if\ $(\alpha;\beta)\in (A\cup B)\setminus\{(2;1)\}$ then $x^{*}$ is attracting;
  \item[ii)]if\ $\alpha=2$ and $\beta=1$ then $x^{*}$ is saddle;
  \end{itemize}
\end{lemma}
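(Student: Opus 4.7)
The plan is to apply the standard one-dimensional stability criteria for $x^*$: the fixed point is attracting when $|U'(x^*)|<1$, repelling when $|U'(x^*)|>1$, and of saddle (non-hyperbolic) type when $|U'(x^*)|=1$. Since the derivative $U'(x)=1-\beta-\frac{\alpha}{(1+x)^2}$ and the simplification $1+x^*=\frac{2\alpha}{\alpha-2\beta+\sqrt{\alpha^2+4\beta^2}}$ have already been recorded above, substitution yields the closed form
$$U'(x^*)=1-\frac{\alpha^2+4\beta^2+(\alpha-2\beta)\sqrt{\alpha^2+4\beta^2}}{2\alpha},$$
and the remaining task is to localize $|U'(x^*)|$ relative to $1$ on the domain $A\cup B$.

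For part (i), I would split $|U'(x^*)|<1$ into the two inequalities $U'(x^*)<1$ and $U'(x^*)>-1$. The first reduces to $\sqrt{\alpha^2+4\beta^2}\bigl(\sqrt{\alpha^2+4\beta^2}+\alpha-2\beta\bigr)>0$, and since $(\alpha^2+4\beta^2)-(2\beta-\alpha)^2=4\alpha\beta>0$ we always have $\sqrt{\alpha^2+4\beta^2}>|2\beta-\alpha|\ge 2\beta-\alpha$, so this inequality holds strictly for all $\alpha,\beta>0$. The second inequality is equivalent to $\alpha^2-4\alpha+4\beta^2+(\alpha-2\beta)\sqrt{\alpha^2+4\beta^2}<0$. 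Isolating the radical and squaring converts the associated equality into the smooth curve $(\alpha^2+4\beta^2)(2-\beta)=4\alpha$; I would then check that $(2,1)$ lies on this curve, that it is the unique intersection with $A\cup B$ (by comparing with the defining boundary $\alpha=1+2\sqrt{\beta(1-\beta)}$ of $A$ and with the strip $\alpha\in(0,2]$, $\beta\in[\tfrac12,1]$ of $B$), and that the correct sign is achieved away from $(2,1)$ — using an interior test point such as $(\alpha,\beta)=(1,\tfrac12)$ to fix the sign and continuity to propagate it.

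For part (ii), direct substitution of $\alpha=2$, $\beta=1$ into the formula above gives $U'(x^*)=1-\frac{4+4+0\cdot 2\sqrt{2}}{4}=-1$, so $|U'(x^*)|=1$ and $x^*$ is of saddle type in the paper's convention.

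The main obstacle is the squaring step in part (i): because $(\alpha-2\beta)\sqrt{\alpha^2+4\beta^2}=4\alpha-\alpha^2-4\beta^2$ has a sign constraint that squaring erases, one must verify that the extracted curve $(\alpha^2+4\beta^2)(2-\beta)=4\alpha$ really bounds the admissible region inside $A\cup B$ from one side only and meets $A\cup B$ exclusively at the corner $(2,1)$. This is a geometric check in the $(\alpha,\beta)$-plane that needs a careful case split into the portion of $A\cup B$ with $\beta<\tfrac12$ (where the $A$-boundary curves in) and the rectangular part $\beta\ge\tfrac12$ of $B$.
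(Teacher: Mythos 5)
Your overall strategy --- computing $U'(x^{*})=1-\frac{\alpha^{2}+4\beta^{2}+(\alpha-2\beta)\sqrt{\alpha^{2}+4\beta^{2}}}{2\alpha}$ and locating $|U'(x^{*})|$ relative to $1$ on $A\cup B$ --- is exactly the paper's; the paper, for its part, merely asserts without detail that the relevant inequality holds on $(A\cup B)\setminus\{(2;1)\}$. Your part (ii) is correct, and your observation that $U'(x^{*})<1$ holds automatically because $\sqrt{\alpha^{2}+4\beta^{2}}>|2\beta-\alpha|$ (since the difference of squares is $4\alpha\beta>0$) cleanly disposes of half of part (i).

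The gap is in the other half. The squared curve $(\alpha^{2}+4\beta^{2})(2-\beta)=4\alpha$ does \emph{not} meet $A\cup B$ only at $(2,1)$: at $\beta=\tfrac12$ it becomes $3\alpha^{2}-8\alpha+3=0$, whose smaller root $\alpha=\frac{4-\sqrt{7}}{3}\approx 0.451$ lies inside $B$. So the step ``check that $(2,1)$ is the unique intersection with $A\cup B$'' fails as stated, and since the curve then cuts $(A\cup B)\setminus\{(2,1)\}$ into pieces, a single interior test point plus continuity cannot propagate the sign of $U'(x^{*})+1$ everywhere. The repair is precisely the sign constraint you name but do not exploit: on the curve one has $4\alpha-\alpha^{2}-4\beta^{2}=(\alpha^{2}+4\beta^{2})(1-\beta)\geq 0$, so the unsquared equation $(\alpha-2\beta)\sqrt{\alpha^{2}+4\beta^{2}}=4\alpha-\alpha^{2}-4\beta^{2}$ forces $\alpha\geq 2\beta$. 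Writing the curve as $(2-\beta)\alpha^{2}-4\alpha+4\beta^{2}(2-\beta)=0$, its roots are $\alpha_{\pm}=\frac{2\pm 2(1-\beta)\sqrt{1+2\beta-\beta^{2}}}{2-\beta}$; one checks $\alpha_{-}<2\beta$ for $\beta\in(0,1)$ (so that branch is extraneous, and it is the one entering $B$), while $\alpha_{+}>2$ for $\beta\in(0,1)$ with $\alpha_{+}=2$ at $\beta=1$, hence the genuine branch touches $\overline{A\cup B}$ only at $(2,1)$. Only after this filtering does your test point $(1,\tfrac12)$ together with connectedness of $(A\cup B)\setminus\{(2,1)\}$ yield $|U'(x^{*})|<1$ there. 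With that correction your argument closes and is in fact more complete than the paper's own.
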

\textbf{Periodic points}

A point $z$ in $W_{0}$ is called periodic point of $W_{0}$ if there exists $p$ so that $W_{0}^{p}(z)=z$. The smallest positive integer $p$ satisfy $W_{0}^{p}(z)=z$ is called the prime period or least period of the point $z.$
Denote by $Per_{p}(W_{0})$
the set of periodic points with prime period $p.$

Let us first describe periodic points with $p=2$ on $S,$ in this case the equation $W_{0}(W_{0}(z))=z$
can be reduced to description of 2-periodic points of the function $U$ defined in (\ref{case1}), i.e.,to solution of the equation
\begin{equation}\label{per.2}
U(U(x))=x.\end{equation}
Note that the fixed points of $U$ are solutions to (\ref{per.2}), to find other solution we consider the equation
$$\frac{U(U(x))-x}{U(x)-x}=0,$$
simple calculations show that the last equation is equivalent to the following
\begin{equation}\label{per.22}(1-\beta)x^{2}+(2-\alpha)x+1+\beta+\frac{\alpha}{\beta-2}=0.\end{equation}
Solutions to (\ref{per.22}) are $x_{1,2}=\frac{\alpha-2\pm\sqrt{D}}{2(1-\beta)}$.
Since $\alpha\in(0;2]$ we have $x_{1}=\frac{\alpha-2-\sqrt{D}}{2(1-\beta)}<0$ and $x_{1}\not\in S_{1}.$ It holds that $x_{2}=\frac{\alpha-2+\sqrt{D}}{2(1-\beta)}\in S_{1}$  when  parameters $\alpha$ and $\beta$ satisfy the attitude $(1+\beta)(2-\beta)\leq \alpha \leq \frac{4(2-\beta)}{3-\beta}$. This attitude holds when parameters $(\alpha;\beta)\in A\cup B$ are only $\alpha=2,\ \beta=1$.

Thus we have
\begin{lemma}\label{perlemma} The set of two periodic points of (\ref{case1}):
\begin{itemize}
  \item if $\alpha=2,\ \beta=1$ then $Per_{2}(U)=S_{1}$
  \item if $(\alpha;\beta)\in (A\cup B)\setminus\{(2;1)\}$ then $Per_{2}(U)=\emptyset$
\end{itemize}
\end{lemma}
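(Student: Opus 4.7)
The plan is to extract the proper 2-periodic points of $U$ from the equation $U(U(x)) = x$ by factoring out the fixed-point factor, solve the residual quadratic, and determine when its roots lie in $S_1$ while the parameters lie in $A \cup B$.

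First I would write $U(x) = [(1-\beta)x^2 + (1-\alpha)x + \beta]/(1+x)$, so that
\[
U(x) - x = -\frac{\beta x^2 + \alpha x - \beta}{1+x},
\]
whose zeros on $S_1$ are exactly the fixed points of $U$ (one of which is the $x^*$ of Lemma~\ref{lemmafix}). Every solution of $U(U(x)) = x$ is either a fixed point or a proper 2-periodic point, so after clearing denominators the numerator of $U(U(x)) - x$ is divisible by $\beta x^2 + \alpha x - \beta$; a polynomial long division gives, up to a nonzero scalar, the quadratic in equation (\ref{per.22}) as the quotient, and its roots are precisely the proper 2-periodic points. For $\beta \neq 1$ these roots are $x_{1,2} = (\alpha - 2 \pm \sqrt{D})/(2(1-\beta))$; since $(\alpha,\beta) \in A \cup B$ forces $\alpha \in (0,2]$, the root $x_1$ is nonpositive and hence outside $S_1$, leaving only $x_2$ as a candidate. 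Requiring $x_2 \in [0,1]$ and squaring reduces, after routine manipulation, to the two inequalities $(1+\beta)(2-\beta) \leq \alpha \leq 4(2-\beta)/(3-\beta)$.

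The main obstacle is then showing that this strip meets the admissible region $A \cup B$ only at $(\alpha,\beta) = (2,1)$. On the portion of $B$ with $\beta \in [1/2, 1)$ this is easy: $(1+\beta)(2-\beta) = 2 + \beta(1-\beta) > 2 \geq \alpha$, with equality forcing $\beta = 1$ and $\alpha = 2$. On $A$, where $\beta \in (0,1/2)$ and $\alpha \leq 1 + 2\sqrt{\beta(1-\beta)}$, I would prove $1 + 2\sqrt{\beta(1-\beta)} < (1+\beta)(2-\beta)$; squaring both sides (both are positive) reduces this to the polynomial inequality $1 - 2\beta + 3\beta^2 - 2\beta^3 + \beta^4 > 0$, which stays strictly positive on $[0,1]$ and can be verified by a suitable sum-of-squares decomposition or by elementary calculus on $[0,1]$.

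Finally, the boundary case $\beta = 1$ must be handled separately because (\ref{per.22}) degenerates there. Substituting $(\alpha,\beta) = (2,1)$ in the definition of $U$ gives $U(x) = (1-x)/(1+x)$, and a direct computation yields $U(U(x)) = x$ identically on $S_1$, so $Per_2(U) = S_1$. Together with the preceding analysis, which shows that for every $(\alpha,\beta) \in (A \cup B) \setminus \{(2,1)\}$ the quadratic (\ref{per.22}) has no root in $S_1$ and hence $Per_2(U) = \emptyset$, this completes the lemma.
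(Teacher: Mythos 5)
Your proposal is correct and follows essentially the same route as the paper: factor the fixed-point equation out of $U(U(x))=x$, reduce to the quadratic (\ref{per.22}), discard the negative root, and show the admissibility strip $(1+\beta)(2-\beta)\leq\alpha\leq 4(2-\beta)/(3-\beta)$ meets $A\cup B$ only at $(2,1)$. You actually supply details the paper only asserts (the intersection argument via $(1-\beta+\beta^2)^2>0$, and the degenerate case $\beta=1$ where $U(x)=(1-x)/(1+x)$ is an involution), so no gap.
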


The following  describes the trajectory of any point $x_{0}$ in $S_{1}$.
\begin{lemma}\label{orbita}
Let $x_{0}\in S_{1}$ be an initial point
\begin{itemize}
  \item[1)] If \  $(\alpha;\beta)\in (A\cup B)\setminus\{(2;1)\}$  then $ \lim_{m\to \infty}U^{m}(x_{0})=x^{*}.$
  \item[2)] If\ $\alpha=2,\beta=1$ then
  $$\lim_{n\to \infty}U^{n}(x_{0})=\left\{\begin{array}{ll}
  x_{0},\ \ \mbox{for} \ \ n=2k, k=0,1,2,...\\[2mm]
\frac{1-x_{0}}{1+x_{0}},\ \ \ \mbox{for} \ \ n=2k-1
\end{array}\right.$$
\end{itemize}
\end{lemma}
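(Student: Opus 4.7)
For part (2), my plan is to substitute $(\alpha,\beta)=(2,1)$ directly into (\ref{case1}), which collapses $U(x)$ to the Möbius map
$$U(x)=1-x-\frac{2x}{1+x}+x=\frac{1-x}{1+x}.$$
A one-line algebraic check then shows $U(U(x))=x$, so $U$ is an involution on $S_{1}$; the claimed values of $U^{n}(x_{0})$ for even and odd $n$ then follow immediately, noting that the orbit is genuinely $2$-periodic (not convergent in the usual sense) whenever $x_{0}\ne x^{*}$.

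For part (1), my plan is to combine the three preceding lemmas with a sign/monotonicity analysis. Setting $\phi(x)=U(x)-x=\beta(1-x)-\alpha x/(1+x)$, the computation $\phi'(x)=-\beta-\alpha/(1+x)^{2}<0$ together with $\phi(0)=\beta>0$ and $\phi(1)=-\alpha/2<0$ shows that $\phi$ is strictly decreasing with unique zero $x^{*}$, so $U(x)>x$ on $[0,x^{*})$ and $U(x)<x$ on $(x^{*},1]$. Passing to $V:=U\circ U$: by Lemmas \ref{lemmafix} and \ref{perlemma} its only fixed point on $S_{1}$ is $x^{*}$, and by Lemma \ref{atr} one has $V'(x^{*})=U'(x^{*})^{2}\in[0,1)$, so $V-\mathrm{id}$ vanishes only at $x^{*}$ and changes sign there from positive to negative. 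Thus the second-iterate dynamics always push points toward $x^{*}$.

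To convert this sign information into genuine convergence I would split by the monotonicity of $U$, which is governed by $U'(x)=1-\beta-\alpha/(1+x)^{2}$, an increasing function of $x$. If $\alpha\le 1-\beta$ then $U$ is monotone increasing on $S_{1}$; if $\beta=1$ or $\alpha\ge 4(1-\beta)$ then $U$ is monotone decreasing; in both monotone cases $V$ is monotone non-decreasing, so each of the subsequences $\{U^{2n}(x_{0})\}$ and $\{U^{2n+1}(x_{0})\}$ is monotone and bounded, converges to a fixed point of $V$, and hence to $x^{*}$; continuity of $U$ finishes the job. The main obstacle is the intermediate unimodal regime (where $U$ has an interior minimum and $V$ need not be globally monotone). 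There I would use Lemma \ref{atr} to produce an open neighbourhood $\mathbb{U}$ of $x^{*}$ on which $V$ acts as a contraction, and then combine the global sign identity $\mathrm{sgn}(V(x)-x)=-\mathrm{sgn}(x-x^{*})$ with the absence of period-$2$ orbits (Lemma \ref{perlemma}) to show that every orbit of $V$ eventually enters $\mathbb{U}$, ruling out any non-trivial $\omega$-limit set and forcing $U^{n}(x_{0})\to x^{*}$.
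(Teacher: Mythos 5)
Your treatment of part 2) and of the two monotone regimes of part 1) is correct and essentially coincides with the paper's: the involution $U(x)=\frac{1-x}{1+x}$ for $(\alpha,\beta)=(2,1)$, the monotone-increasing case $\alpha\le 1-\beta$ (the paper's set $C$) handled by the standard staircase argument, and the monotone-decreasing case $\alpha\ge 4(1-\beta)$ (the set $D$) handled by passing to $V=U\circ U$, whose unique fixed point is $x^{*}$ by Lemmas \ref{lemmafix} and \ref{perlemma}. Your sign function $\phi(x)=\beta(1-x)-\alpha x/(1+x)$ with $\phi'<0$ is in fact a cleaner way of locating $x^{*}$ and determining $\mathrm{sgn}(U(x)-x)$ than what the paper writes.

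The gap is in the unimodal regime $1-\beta<\alpha<4(1-\beta)$ (the paper's sets $E^{*}\cup F^{*}$). The step ``combine the global sign identity $\mathrm{sgn}(V(x)-x)=-\mathrm{sgn}(x-x^{*})$ with the absence of period-$2$ orbits of $U$ to show that every orbit of $V$ eventually enters the contraction neighbourhood $\mathbb U$'' is precisely the non-trivial claim, and the two ingredients you list do not imply it. The sign identity only says that $V$ moves points toward $x^{*}$ from each side separately; it does not prevent an orbit of $V$ from repeatedly overshooting $x^{*}$ and accumulating on a $2$-cycle of $V$, i.e.\ a $4$-cycle of $U$, which Lemma \ref{perlemma} does not exclude (it rules out $2$-cycles of $U$ only). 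To close this you need an extra input: either Sharkovskii's ordering (period $4$ forces period $2$, so $Per_{2}(U)=\emptyset$ kills all periods greater than $1$) combined with Coppel's criterion that a continuous interval map with no $2$-periodic points has every orbit converging to a fixed point; or the paper's more hands-on route, which locates the critical point $x_{min}=\sqrt{\alpha/(1-\beta)}-1$, splits into the subcases $x_{min}<x^{*}$ and $x_{min}>x^{*}$, and shows that every orbit lands after finitely many steps in an invariant subinterval on which $U$ is monotone, after which the monotone-case argument applies. As written, your plan for this regime states what must be proved rather than proving it.
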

\begin{proof}
Denote $$ C=\{(\alpha;\beta):\ \beta\in(0;1],\ \alpha\in(0;1-\beta]\},$$
$$D=\{(\alpha;\beta):\ \beta\in[\frac{1}{2};1],\ \alpha\in[4(1-\beta);2]\},$$
$$E=\{(\alpha;\beta):\ \beta\in(0;1],\ \alpha\in(1-\beta;2(1-\beta)]\},$$
$$F=\{(\alpha;\beta):\ \beta\in(0;1],\ \alpha\in[2(1-\beta);4(1-\beta)]\cap[0;2]\},$$
$$E^{*}=(A\cup B)\cap E,\ F^{*}=(A\cup B)\cap F,$$
where $A\cup B=C\cup D\cup E^{*}\cup F^{*},\ C\cap E^{*}=\emptyset,\ C\cap F^{*}=\emptyset,\ C\cap D=\emptyset,\ E^{*}\cap F^{*}=\emptyset,\ E^{*}\cap D=\emptyset,\ D\cap F^{*}=\emptyset$

If $(\alpha;\beta)\in A\cup B$, then $U(x)$ maps $S_{1}$ to itself.

Let us find minimum points of $U(x)$. By solving $U'(x)=0$ we have
$x_{min}=\sqrt{\frac{\alpha}{1-\beta}}-1$.
If  $(\alpha;\beta)\in C\cup D$ then $x_{min}\not\in int S_{1}=\{(x,y)\in (0;1)^2: x+y=1\}$ if $(\alpha;\beta)\in E^{*}\cup F^{*}$ then $x_{min}\in int S_{1}.$
\begin{itemize}
\item[1)] \textbf{Case:} $x_{min}\not\in int S_{1}.$ (cf. with proof of Lemma 3.4 of \cite{RAU}) In this case for set $C$ we have $U'(x)>0$, i.e.,\ $U$ is an increasing function (see Fig. \ref{rC}). Here we consider the case when the function $U$ has unique fixed point $x^{*}.$ We have that the point $x^{*}$ is attractive, i.e., $|U'(x^*)|<1.$ Now we shall take arbitrary $x_{0}\in S_{1}$ and prove that $x_{n}=U(x_{n-1}),\ n\geq1$ converges as $n\rightarrow\infty.$ Consider the following partition $[0;1]=[0;x^{*})\cup\{x^{*}\}\cup(x^{*};1].$ For any $x\in[0;x^{*})$ we have $x<U(x)<x^{*},$ since $U$ is an increasing function, from the last inequalities we get $x<U(x)<U^{2}(x)<U(x^{*})=x^{*}$ iterating this argument we obtain $U^{n-1}(1)<U^{n}(x)<x^{*},$ which for any $x_{0}\in[0;x^{*})$ gives $x_{n-1}<x_{n}<x^{*},$ i.e.,$x_{n}$ converges and its limit is a fixed point of $U,$ since $U$ has unique fixed point $x^{*}$ in $[0;x^{*}]$ we conclude that the limit is $x^{*}.$ For $x\in(x^{*};1]$ we have $1>x>U(x)>x^{*},$ consequently $x_{n}>x_{n+1},$ i.e.,\ $x_{n}$ converges and its limit is again $x^{*}.$

Note that $D\subset B.$ For set $D$ we have $U'(x)<0$, i.e.,\ $U$ is a decreasing function (see Fig. \ref{rD}). Let $g(x)=U(U(x)).$ $g$ is increasing since $g'(x)=U'(U(x))U'(x)>0.$ By Lemma \ref{lemmafix} and Lemma \ref{perlemma} we have that $g$ has at most unique fixed point (including $x^{*}$). Hence one can repeat the same argument of the proof of part $1)$ for the increasing function $g$ and complete the proof.

\textbf{Case:} $x_{min}\in int S_{1}.$

a) Let $x_{min}<x^{*}.$ Consider the following partition $[0;1]=[0;x_{min})\cup[x_{min};1].$
  The function $U(x)$ is decreasing in $[0;x_{min})$ and is increasing in $[x_{min};1]$ (see Fig. \ref{rE}). For all $x\in[0;x_{min})$, $x<U(x),\ U(x)>x_{min}.$  For $U(x)\in [x_{min};1]$ it can be proved that $x_{n}$ converges to the attractive fixed point $x^{*}$ (see Lemma \ref{atr}) like previous case.

b) Let $x_{min}>x^{*}.$ Consider the following partition $[0;1]=[0;x_{min})\cup[x_{min};1].$
For all $x\in[x_{min};1]$, $x>U(x)>U^{2}(x)>...>U^{k}(x),\ U^{k}(x)<x_{min}$ (see Fig. \ref{rF}).
 If $U(x)\in [0;x_{min})$ then the sequence $x_{n}$ converges to $x^{*}$.

\item[2)]
When $\alpha=2,\ \beta=1$  the function becomes $U(x)=\frac{1-x}{1+x}$. Besides,  $U^{2n}(x)=x,\ U^{2n-1}(x)=\frac{1-x}{1+x}.$
This completes the proof.
\end{itemize}
\end{proof}

\begin{figure}[h!]
\begin{multicols}{2}
\hfill
\includegraphics[width=0.4\textwidth]{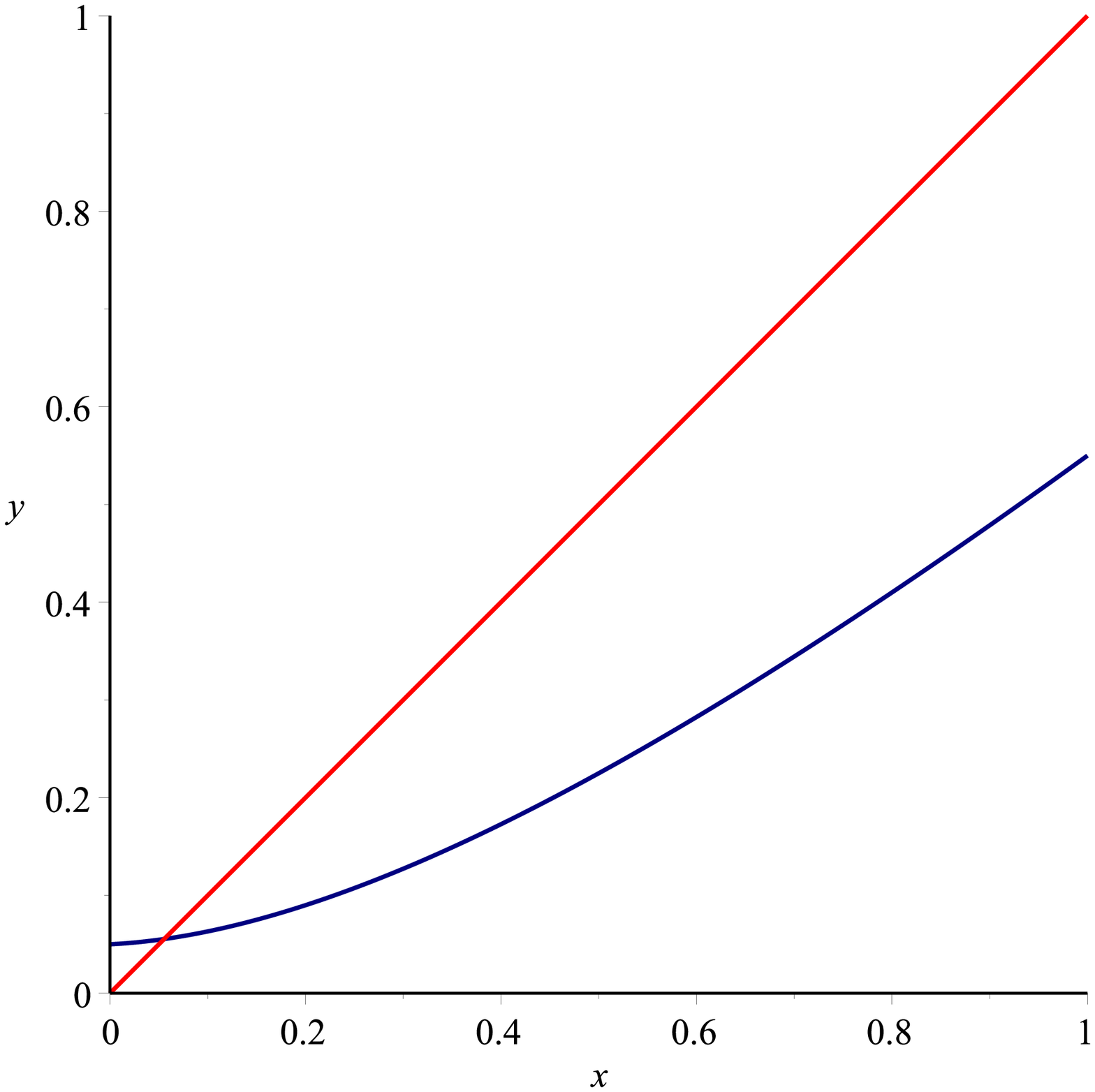}\\
\hfill
\caption{$(\alpha;\beta)\in C$}\label{rC}
\hfill
\includegraphics[width=0.4\textwidth]{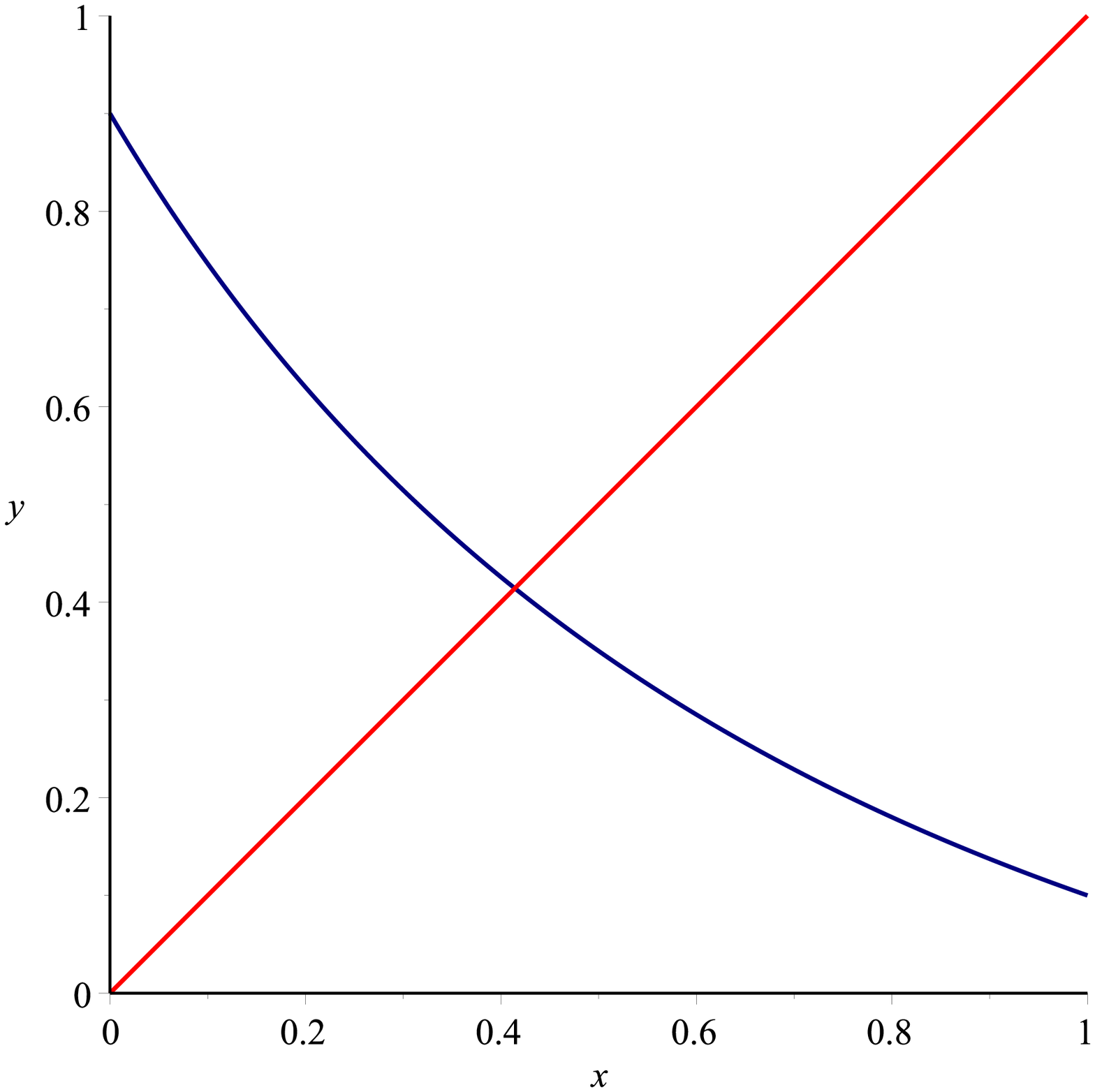}\\
\hfill
\caption{$(\alpha;\beta)\in D$}\label{rD}
\hfill
\end{multicols}
\end{figure}

\begin{figure}[h!]
\begin{multicols}{2}
\hfill
\includegraphics[width=0.4\textwidth]{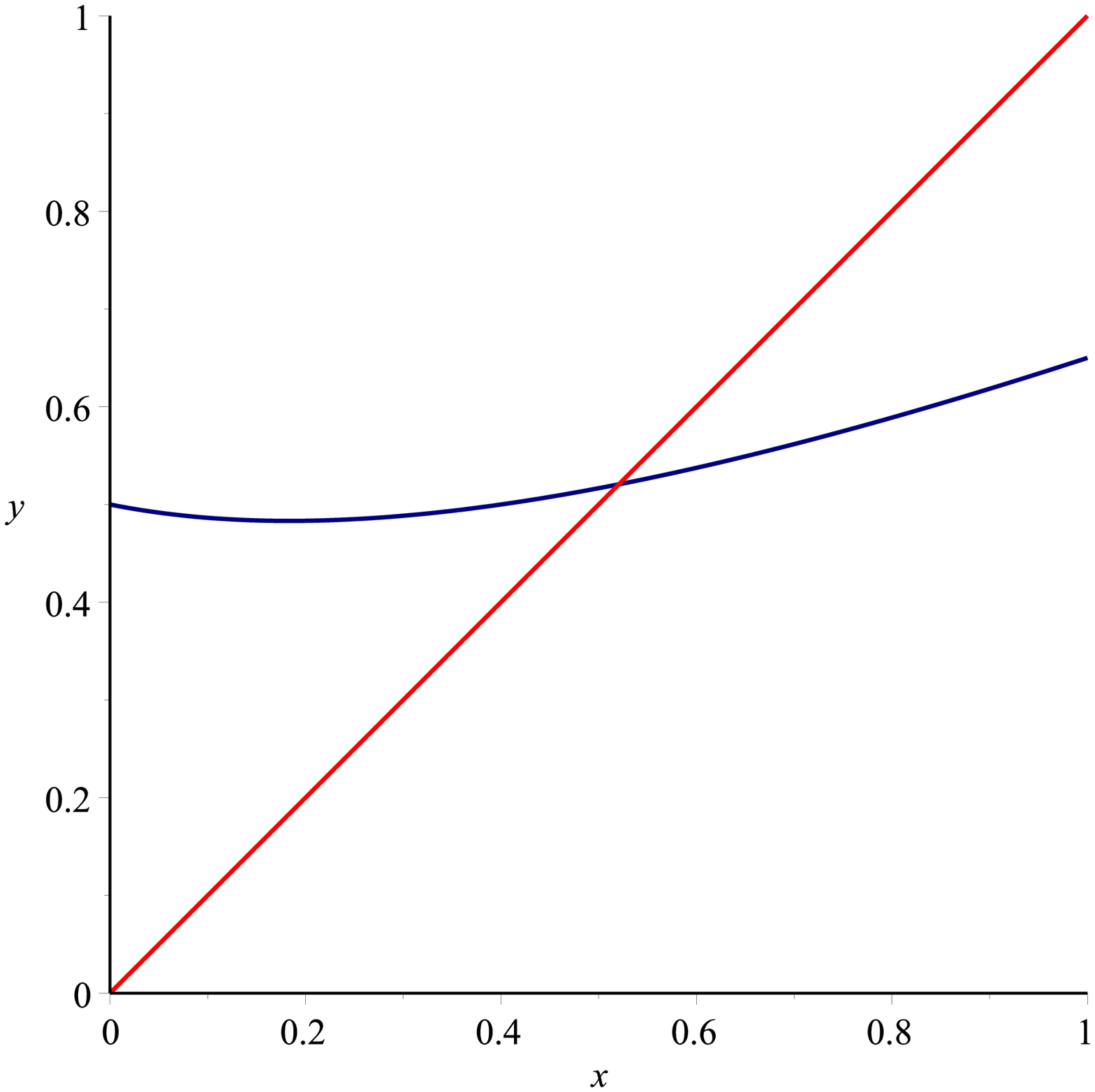}\\
\hfill
\caption{$(\alpha;\beta)\in E^{*}$}\label{rE}
\hfill
\includegraphics[width=0.4\textwidth]{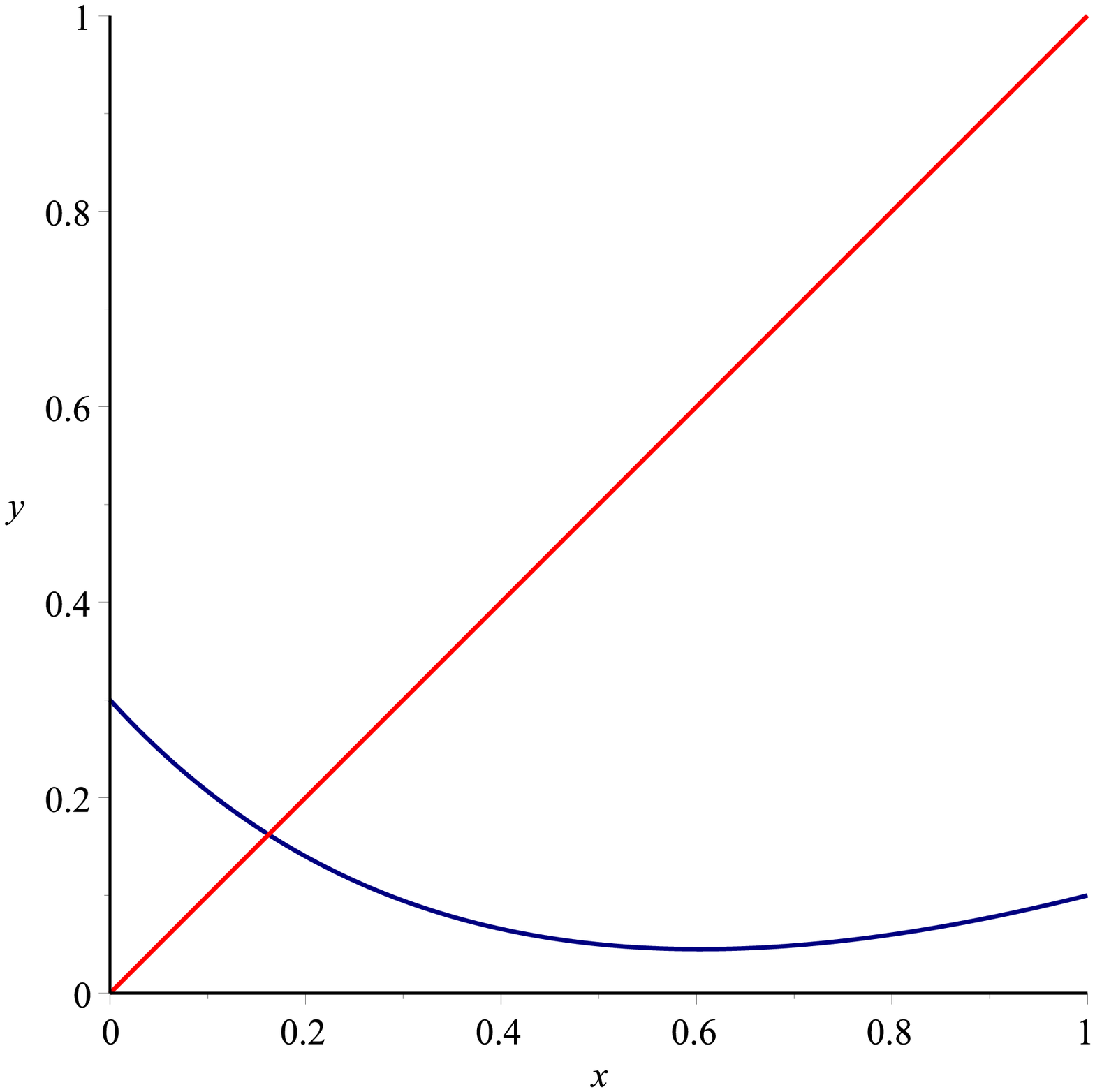}\\
\hfill
\caption{$(\alpha;\beta)\in F^{*}$}\label{rF}
\hfill
\end{multicols}
\end{figure}
As a corollary of proved lemmas we obtain
\begin{thm}\label{to}
Let $z_0=(x_{0}, 1-x_0)\in S$ be an initial point
\begin{itemize}
  \item[1)] If \  $(\alpha;\beta)\in (A\cup B)\setminus\{(2;1)\}$  then
  $$ \lim_{m\to \infty}W_0^{m}(z_{0})=(x^{*}, 1-x^*).$$
  \item[2)] If\ $\alpha=2,\beta=1$ then
  $$\lim_{n\to \infty}W_0^{n}(z_{0})=\left\{\begin{array}{ll}
  (x_{0},1-x_0)\ \ \mbox{for} \ \ n=2k, k=0,1,2,...\\[2mm]
\left(\frac{1-x_{0}}{1+x_{0}}, \, \frac{2x_{0}}{1+x_{0}}\right) \ \ \ \mbox{for} \ \ n=2k-1
\end{array}\right.$$
\end{itemize}
\end{thm}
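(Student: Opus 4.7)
The plan is to reduce the two-dimensional dynamics of $W_0$ on $S$ to the one-dimensional dynamics of $U$ on $S_1$ that was already analyzed in Lemma \ref{orbita}. First I would invoke the invariance lemma: for both parameter regimes in the theorem we have $(\alpha,\beta)\in A\cup B$, so $W_0(S)\subset S$. Consequently every iterate has the form $W_0^n(z_0)=(x_n,1-x_n)$ with $x_n\in S_1$, and the second coordinate is determined by the first.

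Next I would identify the recursion governing $x_n$. Substituting $y_{n-1}=1-x_{n-1}$ into the first equation of (\ref{systemacase1}) gives precisely
$$x_n=\beta(1-x_{n-1})-\frac{\alpha x_{n-1}}{1+x_{n-1}}+x_{n-1}=U(x_{n-1}),$$
where $U$ is the map (\ref{case1}). Hence $x_n=U^n(x_0)$ for all $n\geq 0$, and the analysis of $W_0^n(z_0)$ is reduced to the already established one-dimensional result.

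For part (1), with $(\alpha;\beta)\in(A\cup B)\setminus\{(2;1)\}$, Lemma \ref{orbita}(1) yields $\lim_{m\to\infty} U^m(x_0)=x^*$, and continuity of the map $x\mapsto(x,1-x)$ gives $\lim_{m\to\infty}W_0^m(z_0)=(x^*,1-x^*)$.

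For part (2), with $\alpha=2,\ \beta=1$, Lemma \ref{orbita}(2) gives $U^{2k}(x_0)=x_0$ and $U^{2k-1}(x_0)=\frac{1-x_0}{1+x_0}$. The first case immediately produces $W_0^{2k}(z_0)=(x_0,1-x_0)$; in the second case the second coordinate is
$$1-\frac{1-x_0}{1+x_0}=\frac{2x_0}{1+x_0},$$
which matches the stated expression. Since the substantive dynamical content has already been packaged into Lemma \ref{orbita}, I do not expect any real obstacle here beyond the brief bookkeeping above; the only thing worth flagging is that one should double-check the identification of the restricted map with $U$, but this is a direct substitution.
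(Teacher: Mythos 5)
Your proposal is correct and follows exactly the route the paper intends: the paper presents Theorem \ref{to} ``as a corollary of proved lemmas,'' namely the invariance of $S$ under $W_0$ for $(\alpha,\beta)\in A\cup B$, the identification of the restricted map with $U$ from (\ref{case1}), and Lemma \ref{orbita}. Your bookkeeping of the second coordinate $1-\frac{1-x_0}{1+x_0}=\frac{2x_0}{1+x_0}$ supplies the only detail the paper leaves implicit.
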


\section{Biological interpretations}

In biology an population biologist is interested in the long-term behavior of the population of a certain
species or collection of species. Namely, what happens
to an initial population of members. Does the population become arbitrarily large as time goes on? Does the population tend to zero, leading
to extinction of the species?  In this section we briefly give
some answers to these questions related to our model of the mosquito population.

Each point (vector) $z=(x;y)\in \mathbb{R}^{2}_{+}$ can be considered as a state (a measure) of the mosquito
population. In case $x+y\ne 0$ one can consider $z$ as a probability measure (after a normalization if needed).
If, for example, the value of
$x$ is close to zero, biologically this means that the contribution of the larvae class is small
in future of the population.

Let us give some interpretations of our main results:

\begin{itemize}
\item[(a)] (Case Theorem \ref{fixthm}, part a.) The population has a unique equilibrium state;

\item[(b)] (Case Theorem \ref{fixthm}, part b.) Under conditions on parameters the population has exactly two equilibrium states;

\item[(c)] (Case Theorem \ref{fixthm}, part c.) The population has a continuum set of equilibrium states.

\item[(d)] (Case Proposition \ref{pr}) If parameters of the model satisfy the conditions of the proposition then the trajectory of the
population has limit $(0,0)$ (i.e. vanishing of the population) or $(x^*, y^*)$ (i.e. both species will survive).

\item[(e)] (Case Theorem \ref{to})  Under conditions of the part 1) of the theorem  both stages of the population will survive with
probability $x^*$ and $1-x^*$ respectively.
 Under conditions of part 2) of the theorem, for each initial state the population
will have 2-periodic state. Thus any state is 2-periodic.
 \end{itemize}

\end{document}